\newtheorem{theorem}{Theorem}
\newtheorem{lemma}[theorem]{Lemma}
\newtheorem{ex}[theorem]{Example}
\newtheorem{remark}[theorem]{Remark}
\numberwithin{equation}{section}
\newcommand{\R}{\mathbb R}
\newcommand{\N}{\mathbb N}
\begin{document}

\title[Gradient--type systems]{Gradient-type systems on unbounded \\ domains of the Heisenberg group}


\author{Giovanni Molica Bisci}
\address[G. Molica Bisci]{Dipartimento di Scienze Pure e Applicate (DiSPeA),
 Universit\`{a} degli Studi di Urbino Carlo Bo, Piazza della Repubblica 13,
61029 Urbino, Italy}
\email{\tt giovanni.molicabisci@uniurb.it}

\author{Du\v{s}an D. Repov\v{s}}
\address[Du\v{s}an Repov\v{s}]{Faculty of Education, and Faculty of Mathematics and Physics,
University of Ljubljana \& 
Institute of Mathematics, Physics and Mechanics,
1000
Ljubljana, Slovenia}
\email{\tt dusan.repovs@guest.arnes.si}

\keywords{Gradient--type system, Heisenberg group, variational methods, principle of
symmetric criticality, symmetric
solutions.\\
\phantom{aa} 2010 AMS Subject Classification:
Primary: 35R03, 35H20, 35J70; Secondary: 35A01, 35A15.}


\begin{abstract}
The purpose of this paper is to study the existence of weak solutions for some classes of one--parameter
subelliptic gradient--type systems involving a Sobolev--Hardy potential defined on an unbounded domain $\Omega_\psi$ of the Heisenberg group $\mathbb{H}^n=\mathbb{C}^n\times \mathbb{R}$ ($n\geq 1$) whose geometrical profile is determined by two real positive functions $\psi_1$ and $\psi_2$ that are bounded on bounded sets. The treated problems have a
variational structure and thanks to this, we are able to prove the existence of an open interval $\Lambda\subset (0,\infty)$ such that, for every parameter $\lambda\in \Lambda$, the system has at least two non--trivial symmetric weak solutions that are uniformly bounded with respect to the Sobolev $HW^{1,2}_0$--norm. Moreover, the existence is stable under certain small subcritical perturbations of the nonlinear term. The main proof, crucially based
 on the Palais principle of symmetric criticality, is obtained by developing a group--theoretical procedure on the unitary group $\mathbb{U}(n)=U(n)\times\{1\}$ and by exploiting some compactness embedding results into Lebesgue spaces, recently proved for suitable  $\mathbb{U}(n)$--invariant subspaces of the Folland--Stein space $HW^{1,2}_0(\Omega_\psi)$. A key ingredient for our variational approach is a very general min--max argument valid for sufficiently smooth functionals defined on reflexive Banach spaces.
\end{abstract}

\maketitle

\tableofcontents

\section{Introduction}\label{sec:introduzione}
The purpose of the present paper is to study the existence of weak solutions for subelliptic systems defined on unbounded domains of the Heisenberg group $\mathbb{H}^n=\mathbb{C}^n\times \mathbb{R}$ ($n\geq 1$). More precisely,
let $\psi_1,\psi_2:[0,\infty)\rightarrow \mathbb{R}$ be two functions that are bounded on bounded sets, with $\psi_1(r)<\psi_2(r)$ for every $r\geq 0$. Define
\begin{equation}\label{omega}
\Omega_\psi=\{q\in \mathbb{H}^n: q=(z,t)\,\, {\rm with}\,\, \psi_1(|z|)<t<\psi_2(|z|)\},
\end{equation}
where $|z|=\sqrt{\sum_{i=1}^{n} |z_i|^2}$, and assume that $\Omega_\psi$ contains the origin $O=(0,0)\in \mathbb{H}^n$.\par
\indent We deal here with the following singular subelliptic problem
\begin{equation}\label{p2}\left\{\begin{aligned}
&-\Delta_{\mathbb H^n}u-\nu V(q)u+u= \lambda K(q)\partial_1F(u,v)+\mu \partial_1G(q,u,v)\quad{\rm in}\,\,\Omega_\psi\\
&-\Delta_{\mathbb H^n}v-\nu V(q)v+v= \lambda K(q)\partial_2F(u,v)+\mu \partial_2G(q,u,v)\, \quad{\rm in}\,\,\Omega_\psi\\
& \,\,\,u=v=0 \,\,{\rm on }\,\, \partial\Omega_\psi,
\end{aligned}\right.\end{equation}
where $\Delta_{\mathbb H^n}$ is the Kohn--Laplace operator
 defined by
\begin{equation*}
\Delta_{\mathbb H^n}\varphi={\rm div}_H(D_{\mathbb H^n}\varphi)
\end{equation*}
along any $\varphi\in C^\infty_0(\mathbb H^n)$, with $$D_{\mathbb H^n}\varphi=(X_1\varphi,\cdots ,X_n \varphi,Y_1\varphi,\cdots, Y_n\varphi)$$  as in  Section~\ref{sec2}, and $\{X_j,Y_j\}_{j=1}^n$ is the basis of horizontal left invariant vector fields on $\mathbb H^n$, that is
$$X_j=\frac {\partial} {\partial x_j}+2y_j\frac {\partial} {\partial t}, \qquad \qquad  Y_j=\frac {\partial} {\partial y_j}-2x_j\frac {\partial} {\partial t},$$  for $j=1,\dots,n$. The {\em critical Sobolev exponent} $2^*$ in the Heisenberg group $\mathbb{H}^n$ is defined as $2^*=2Q/(Q-2)$, where $Q=2n+2$ is the homogeneous dimension of $\mathbb{H}^n$.

On the potentials $V,K:\Omega_\psi\rightarrow \R$ we assume that:
\begin{itemize}
\item[$(h_V)$] \emph{$V$ is measurable, cylindrically symmetric, i.e., $V(z,t)=V(|z|,t)$, and there exists a constant $C_V>0$ such that
    \begin{equation}\label{comparaV}
    0\leq V(z,t)\leq C_V\frac{|z|^2}{r(z,t)^4},
    \end{equation}
    for every $q=(z,t)\in \mathbb H^n$, where $r$ denotes the Heisenberg norm $r(q)=r(z,t)=(|z|^4+t^2)^{1/4} $, $z\in \mathbb{C}^n$, $t \in \mathbb R$};
    \item[$(h_K)$] $K\in L^{\infty}(\Omega_\psi)\cap L^1(\Omega_\psi)$ \textit{is a non-negative cylindrically symmetric function with} $$\inf_{q\in\Omega_0}K(q)>0,$$ \textit{for some open set} $\Omega_0\subset \Omega_\psi$.
\end{itemize}

The parameters {$\lambda,\mu>0$ and $\nu\in [0,C_V^{-1}n^2)$. Suppose also that the nonlinearity $F$ satisfies the following hypotheses:}

\begin{itemize}
\item[$(f_1)$] \textit{$F:\R^2\rightarrow\R$ is a $C^1$--function with $F(0,0)=0$ and there exists $(\eta_0,\zeta_0)\in \R^2$ such that $F(\eta_0,\zeta_0)>0$};
\item[$(f_2)$] \textit{$\displaystyle\lim_{\eta,\zeta\rightarrow 0}\frac{\partial_1 F(\eta,\zeta)}{|\eta|}=\displaystyle\lim_{\eta,\zeta\rightarrow 0}\frac{\partial_2 F(\eta,\zeta)}{|\zeta|}=0$};
    \item[$(f_3)$] \textit{there exist $\epsilon >0$ and $\alpha\in (2,2^*)$ such that
    $$
    |\partial_1F(\eta,\zeta)|\leq \epsilon \left(|\eta|+|\zeta|+|\eta|^{\alpha-1}\right),
    $$
    and
    $$
    |\partial_2F(\eta,\zeta)|\leq \epsilon \left(|\eta|+|\zeta|+|\zeta|^{\alpha-1}\right),
    $$
    for every $(\eta,\zeta)\in \R^2$};
    \item[$(f_4)$] \textit{there exist $p_F,q_F\in (0,2)$ and
    suitable real constants $\kappa_j>0$, such that
    $$
    F(\eta,\zeta)\leq \kappa_1|\eta|^{p_F}+\kappa_2|\zeta|^{q_F}+\kappa_3,
    $$
    for every $(\eta,\zeta)\in \R^2$}.
\end{itemize}

Here and in the sequel, the nonlinearities $\partial_1F$ and $\partial_2 F$ denote the partial derivatives of $F$ with respect to the first variable and the second variable, respectively.

Furthermore, for the nonlinear term $G:\Omega_\psi\times\R^2\rightarrow\R$ we assume the following conditions:

\begin{itemize}
\item[$(g_1)$] \textit{$G:\Omega_\psi\times\R^2\rightarrow\R$ is a continuous function, $(\eta,\zeta)\mapsto G(q,\eta,\zeta)$ is of class $C^1$ and $G(q,0,0)=0$ for every $q\in \Omega_\psi$};
\item[$(g_2)$] \textit{$\displaystyle\lim_{\eta,\zeta\rightarrow 0}\frac{\partial_1 G(q,\eta,\zeta)}{|\eta|}=\displaystyle\lim_{\eta,\zeta\rightarrow 0}\frac{\partial_2 G(q,\eta,\zeta)}{|\zeta|}=0$, uniformly for every $q\in \Omega_\psi$};
    \item[$(g_3)$] \textit{there exist $\kappa >0$ and $\beta\in (2,2^*)$ such that
    $$
    |\partial_1G(q,\eta,\zeta)|\leq \kappa \left(|\eta|+|\zeta|+|\eta|^{\beta-1}\right),
    $$
    and
    $$
    |\partial_2G(q,\eta,\zeta)|\leq \kappa \left(|\eta|+|\zeta|+|\zeta|^{\beta-1}\right),
    $$
    for every $q\in \Omega_\psi$ and $(\eta,\zeta)\in \R^2$};
    \item[$(g_4)$] \textit{the function $G(\cdot, \eta,\zeta)$ is cylindrically symmetric for every $(\eta,\zeta)\in \R^2$}.
\end{itemize}

The nonlinearities $\partial_1 G$ and $\partial_2 G$ denote the partial derivatives of $G$ with respect to the second variable and the third variable, respectively.

Noncompact variational problems have attracted much attention since the late seventies.
System \eqref{p2} is a reasonably useful generalization of most studied elliptic problems, with singular
potentials and subcritical nonlinearities, which naturally arise in different branches of mathematics.
More precisely, differential problems involving a subelliptic operator on an unbounded domain $\Omega$
of stratified groups have been intensively studied in recent years by several authors,
see, among others, the papers \cite{GL,M1,M2,ST,T} and references therein.

As observed in the recent paper \cite{MolicaPucci}, if the domain $\Omega$ is not bounded then the Folland--Stein space $HW^{1,2}_0(\Omega)$ need not be compactly embeddable into a Lebesgue space. This lack of compactness produces several difficulties in exploiting variational methods.
In order to recover compactness for the unbounded case, a standard hypothesis in the above cited results was the \textit{strongly asymptotically contractive} condition on $\Omega$ (see \cite{dPF} and \cite{M1} for related topics).

Now, we observe that a strongly asymptotically contractive domain $\Omega$ is geometrically thin at infinity. Following \cite{bk}, in the presence of symmetries, by replacing the contractive assumption on $\Omega$ with a technical geometrical hypothesis, we are able to treat here subelliptic gradient--type systems on the Heisenberg group $\mathbb{H}^n$, in which the domain $\Omega_\psi$ is possibly large at infinity. We also notice that if the functions $\psi_1$ and $\psi_2$ are bounded, the domain $\Omega_\psi$ is strongly asymptotically
contractive and the entire space $HW^{1,2}_{0}(\Omega_\psi)$ is compactly embedded in
$L^{q}(\Omega_\psi)$ for every $q\in (2,2^*)$.
We refer to~\cite{bk,GL,M2,ST} for further details.

In the main result of the present paper (see Theorem \ref{th1}) we prove the existence of an open interval $\Lambda\subset (0,\infty)$ such that, for every parameter $\lambda\in \Lambda$, the system \eqref{p2} has at least two symmetric weak solutions that are uniformly bounded with respect to the Sobolev $HW^{1,2}_0$--norm. This existence result is stable in the presence of a small perturbation term $G$ for which the structural conditions $(g_1)$--$(g_4)$ are satisfied.

 In order to prove Theorem \ref{th1}, we
find critical points of the energy functional associated with problem \eqref{p2} by means
of a mini-max theorem and the well--known Palais principle of
symmetric criticality (see, respectively, Theorems \ref{teorricceri} and \ref{lem3}).
More precisely, our strategy is to use the topological unitary group $\mathbb{U}(n)=U(n)\times \{1\}$. Indeed, this group acts continuously on $HW^{1,2}_0(\Omega_\psi)$ by
$$
(\widehat{\tau}\sharp u)(q)=u({\tau}^{-1}z,t)\quad\mbox{for  all } q=(z,t)\in\mathbb H^n,
$$
and the $T$--invariant closed subspace $HW^{1,2}_{0,T}(\Omega_\psi)$ associated to the subgroup
$$
T=U(n_1)\times...\times U(n_\ell)\times\{1\},\quad n=\sum_{i=1}^{\ell}n_i,\quad\mbox{with}\quad n_i\geq 1\,\,\mbox{and}\,\,\ell\geq 1,
$$
 is compactly embedded in the Lebesgue space $L^q(\Omega_\psi)$, for every $q\in (2,2^*)$, as proved in \cite{bk}; see Lemmas \ref{lem1} and \ref{lem2}. A similar argument works for strip--like domains $\Omega=\omega\times\R^{n-m}$, where $\omega\subset \R^m$ is bounded and $n-m\geq 2$, yielding the space of cylindrically symmetric functions on $H^1_0(\Omega)$ via the group $T=id_{\R^m}\times O(n-m)$ (see \cite{EL} and \cite{KO}).

 Successively, thanks to the left invariance of the standard Haar measure $\mu$ of the Heisenberg group $\mathbb{H}^n$, with respect to the natural action of the group $*:\mathbb{U}(n)\times HW^{1,2}_0(\Omega_\psi)\rightarrow HW^{1,2}_0(\Omega_\psi)$, given by
 $$\widehat{\tau}*q=(\tau z,t)\quad\mbox{for all $\widehat{\tau}=(\tau,1)\in \mathbb{U}(n)$,}\,\,q=(z,t)\in\mathbb H^n,$$
\noindent (see Chapter~III $\S$~2 No~4
of Bourbaki \cite{Btop}  and Chapter~7 $\S$~1 No~1 of Bourbaki \cite{BLie} for related topics) the principle of symmetric criticality of Palais, see Theorem~\ref{lem3} below, can be applied to the associated energy Euler--Lagrange functional
$$
I_{\lambda,\mu}(u,v)=\frac{1}{2}\left(\|u\|^2+\|v\|^2\right)-\lambda\int_{\Omega_\psi}\!\!K(q)F(u(q),v(q))d\mu(q)
$$
$$
\qquad-\mu\int_{\Omega_\psi} G(q,u(q),v(q)) d\mu(q),
$$
for every $(u,v)\in HW^{1,2}_{0,T}(\Omega_\psi)\times HW^{1,2}_{0,T}(\Omega_\psi)$,
 allowing a variational approach to
the problem~\eqref{p2}.

The methods used here may be suitable for other purposes, too. Indeed,
 we recall that a similar variational approach has been
used in a different context, in order to prove multiplicity results for elliptic problems defined in Euclidean strip--like domains and
involving the $p$--Laplacian operator (see \cite[Theorem 2.2]{K}). More precisely, in \cite{K} the author studied gradient--type systems of the form
\begin{equation*}\left\{\begin{aligned}
&-\Delta_{p}u= \lambda F_u(x,u,v)\quad{\rm in}\,\,\Omega\\
&-\Delta_{q}v= \lambda F_v(x,u,v)\, \quad{\rm in}\,\,\Omega\\
& \,\,\,u=v=0 \,\,{\rm on }\,\, \partial\Omega,
\end{aligned}\right.\end{equation*}
\noindent where
the nonlinearities $F_u$ and $F_v$ denote the partial derivatives of $F$ with respect to the second variable and the third variable, respectively, and
$\Omega=\omega\times\R^l$,  where $\omega$ is a bounded open subset of the Euclidean space $\R^m$ with smooth boundary, $m\geq 1$, $l\geq 2$ and $1<p,q<m+l$. Recently, nonlocal gradient--type systems have been investigated by exploiting similar variational arguments (see \cite[Theorem 3.1]{CaVi}). In both papers \cite{CaVi,K} a crucial role is played by some invariant subgroups of the orthogonal group $O(n)$ and Lions' embedding results (see \cite[Th\'{e}or\`{e}mes III.2 and III.3]{Lions}).

Theorem \ref{th1} extends the existence results obtained
in \cite{CaVi, K} to the Heisenberg group setting. In addition, in our case, the
presence of the Sobolev--Hardy term makes the search of weak solutions
much more delicate. Indeed, in order to handle the singular term in \eqref{p2}, it is crucial to introduce the subelliptic Hardy--Sobolev inequality
\begin{equation}\label{H}
\int_{\mathbb H^n}\psi^2 \dfrac{|\varphi|^2}{r^2}  d\mu(q) \le \left(\dfrac{2}{Q-2}\right)^2\int_{\mathbb H^n}|D_{\mathbb H^n}\varphi|_{\mathbb H^n}^2 d\mu(q),
\end{equation}
for every $\varphi \in C_0^\infty(\mathbb H^n\setminus\{O\})$, where the
main geometrical function $\psi$ is defined by
\begin{equation}\label{H2}
\psi (q)= |D_{\mathbb H^n}r|_{\mathbb H^n}=\frac {|z|} {r(q)} \quad \text{for all }  q=(z,t)\in \mathbb H^n,\text{ with }q\ne O,
\end{equation}
and $0\leq\psi  \leq 1 $, $\psi(0,t)\equiv 0$, $\psi(z,0)\equiv 1$.\par
Here, $r$ denotes the Heisenberg norm $r(q)=r(z,t)=(|z|^4+t^2)^{1/4} $, $z=(x,y) \in \mathbb C^{n}$, $t \in \mathbb R$.
 Furthermore, direct calculations show that
$$
\Delta_{\mathbb H^n}r=\frac {2n+1} {r} \psi^2 \quad \text{in } \ \mathbb H^n\setminus \{O\}.
$$
For details we refer to \cite[Section 2.1]{rigJDE}.

An important incentive to the study of subelliptic systems on the entire Heisenberg group $\mathbb{H}^n$ was recently provided by Pucci and co--authors  (\cite{BFP, BP, Pucci}), see also the papers \cite{fo, Gao, MaMi, ANONA1,a,b}.

In particular, the existence of nontrivial solutions for a subelliptic Schr\"{o}dinger--Hardy system
in the Heisenberg  group $\mathbb{H}^n$ was investigated in \cite{Pucci}, where the author considered the following problem
\begin{equation}\label{pppp1}\left\{\begin{aligned}
&-\Delta ^p_{\mathbb H^n}u+V(q)|u|^{p-2}u-\gamma\psi^p\frac{|u|^{p-2}u}{r(q)^{p}}=\lambda H_u(q,u,v)+\dfrac{\alpha}{p^*}|v|^{\beta}|u|^{\alpha-2}u\\
& -\Delta ^p_{\mathbb H^n} v+V(q)|v|^{p-2}v-\gamma\psi^p\frac{|v|^{p-2}v}{r(q)^p}=\lambda H_v(q,u,v)
+\dfrac{\beta}{p^*}|u|^{\alpha}|v|^{\beta-2}v,
\end{aligned}\right.\end{equation}
where $\gamma$ and $\lambda$ are real parameters, $Q=2n+2$ is the homogeneous dimension of the Heisenberg group $\mathbb H^n$, $1<p<Q$, the exponent $\alpha>1$ and $\beta>1$ are such that $\alpha+\beta=p^*$, $p^*=pQ/(Q-p)$,
and $\Delta^p_{\mathbb H^n}$ is the $p$--Laplacian operator on  $\mathbb H^n$,
which is defined by
\begin{equation*}
\Delta ^p_{\mathbb H^n}\varphi={\rm div}_H(|D_{\mathbb H^n}\varphi|^{p-2}_{\mathbb H^n}D_{\mathbb H^n}\varphi)
\end{equation*}
along any $\varphi\in C^\infty_0(\mathbb H^n)$, that is $\Delta^p_{\mathbb H^n}$
is the familiar horizontal $p$--Laplacian operator.
The potential function $V$ satisfy the following condition:

\noindent
$(\mathcal V)$\quad $V\in C(\mathbb H^n)$ {\it and} $\inf_{q\in\mathbb H^n}V(q)= V_0>0$.

The nonlinearities $H_u$ and $H_v$ denote the partial derivatives of $H$ with respect to the second variable and the third variable, respectively, and $H$ satisfies
\begin{itemize}
\item[$(\mathcal H)$] $H\in C^1(\mathbb{H}^n\times\mathbb{R}^2,\mathbb{R}^+)$, $H_z(q,0,0)=0$ {\it for all $q\in\mathbb{H}^n$ and there exist $\mu$ and $s$ such that $p<\mu\le s<p^*$
    and for every $\varepsilon>0$ there exists $C_\varepsilon>0$ for which the inequality
    $$|H_z(q,w)|\leq \mu\varepsilon|w|^{\mu-1}+qC_\varepsilon|w|^{s-1},\quad w=(u,v),\,\,|w|=\sqrt{u^2+v^2},$$
{\it holds for any } $(q,w)\in \mathbb{H}^n\times\mathbb{R}^2$, where $H_w=(H_u,H_v)$, and also}
\begin{align*}
0\leq\mu H(q,w)\leq H_w(q,w)\cdot w\ \ \mbox{\it for all } (q,w)\in\mathbb{H}^n\times \mathbb{R}^2
\end{align*}
{\it is valid.}
\end{itemize}

In this very interesting paper \cite{Pucci}, an existence result is obtained by an application of the mountain pass theorem and the Ekeland variational principle.
We emphasize that the assumptions adopted here are milder and in any case much different from
the usual hypotheses in problem \eqref{pppp1}. Moreover, to the best of our knowledge, Theorem \ref{th1} is
the first multiplicity result for subelliptic gradient--type systems on unbounded domains of $\mathbb{H}^n$.
In contrast with \cite{Pucci}, the Hilbertian setting, i.e. $p=2$, is peculiar for our approach in order to recover
the compactness properties stated in Lemma \ref{lem2} (see also Remark \ref{osservo}). In a forthcoming work
we plan to come back to problem \eqref{pppp1} and prove some multiplicity results by exploiting suitable group--theoretical
arguments and variational methods.

 Now, let us recall that the Folland--Stein horizontal Sobolev space $HW^{1,2}_0(\Omega_\psi)$ is the completion of $C^{\infty}_0(\Omega_\psi)$ with respect to the Hilbertian norm
   \begin{equation*}\label{Norma}\begin{gathered}
\|u\|_{HW^{1,2}_0(\Omega_\psi)}= \left(  \int_{\Omega_\psi} |D_{\mathbb H^n}u(q)|_{\mathbb H^n}^2 d\mu(q)+ \int_{\Omega_\psi} |u(q)|^2 d\mu(q)\right)^{1/2},\\
\langle u,\varphi\rangle= \int_{\Omega_\psi}\!\big( D_{\mathbb H^n}u(q), D_{\mathbb H^n}\varphi(q)\big)_{\mathbb H^n}\!d\mu(q)+ \int_{\Omega_\psi}u(q)\varphi(q) d\mu(q).
\end{gathered}
\end{equation*}

As far as we know, the abstract framework and Theorem \ref{th1} in the subelliptic setting are new also in the non--singular case.
A special and meaningful case of our main result reads as follows.

\begin{theorem} \label{th1}

Let $\Omega_\psi\subset \mathbb H^n$ as in \eqref{omega}, and let $K:\Omega_\psi\rightarrow \R$ be a potential satisfying $(h_K)$.
Furthermore, let $F:\R^2\rightarrow \R$ be a continuous function satisfying $(f_1)$--$(f_4)$.

Then there exist a number $\sigma>0$ and a nonempty open set $\Lambda\subset (0,\infty)$ such that, for every $\lambda\in \Lambda$, the following system
\begin{equation}\label{p23}\left\{\begin{aligned}
&-\Delta_{\mathbb H^n}u+u= \lambda K(q)\partial_1F(u,v)\quad{\rm in}\,\,\Omega_\psi\\
&-\Delta_{\mathbb H^n}v+v= \lambda K(q)\partial_2F(u,v)\, \quad{\rm in}\,\,\Omega_\psi\\
& \,\,\,u=v=0 \,\,{\rm on }\,\, \partial\Omega_\psi,
\end{aligned}\right.\end{equation}

\noindent has at least two solutions $(u^{(j)}_{\lambda,\mu},v^{(j)}_{\lambda,\mu})\in HW^{1,2}_{0,{\rm cyl}}(\Omega_\psi)\times HW^{1,2}_{0,{\rm cyl}}(\Omega_\psi)$, with $j\in \{1,2\}$, lying in the ball
$$
\{(u,v)\in HW^{1,2}_{0,{\rm cyl}
}(\Omega_\psi)\times HW^{1,2}_{0,{\rm cyl}
}(\Omega_\psi):\|(u,v)\|\leq \sigma\},
$$
where
$$
\|(u,v)\|=\|u\|_{HW^{1,2}_0(\Omega_\psi)}+\|v\|_{HW^{1,2}_0(\Omega_\psi)},
$$
and
\begin{equation*}
HW^{1,2}_{0,{\rm cyl}
}(\Omega_\psi)=\{u\in HW^{1,2}_0(\Omega_\psi): u(z,t)=u(|z|,t)\mbox{ for all  }q=(z,t)\in\Omega_\psi\},
\end{equation*}
is the linear subspace of cylindrically symmetric functions of $HW^{1,2}_0(\Omega_\psi)$.
\end{theorem}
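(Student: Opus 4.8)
The plan is to obtain Theorem~\ref{th1} as a special case of the general multiplicity result (Theorem~\ref{th1} with $\mu=0$, $\nu=0$, $V\equiv 0$, $G\equiv 0$), by setting up the variational framework on the reflexive Banach space $X_{\mathrm{cyl}}:=HW^{1,2}_{0,\mathrm{cyl}}(\Omega_\psi)\times HW^{1,2}_{0,\mathrm{cyl}}(\Omega_\psi)$ and applying the abstract min--max theorem (Theorem~\ref{teorricceri}) together with the Palais principle of symmetric criticality (Theorem~\ref{lem3}). First I would observe that $HW^{1,2}_{0,\mathrm{cyl}}(\Omega_\psi)$ is exactly the $T$--invariant subspace $HW^{1,2}_{0,T}(\Omega_\psi)$ corresponding to the choice $\ell=1$, $n_1=n$, so that Lemma~\ref{lem2} applies: the embedding $HW^{1,2}_{0,\mathrm{cyl}}(\Omega_\psi)\hookrightarrow L^q(\Omega_\psi)$ is compact for every $q\in(2,2^*)$. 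Then I would define the restricted energy functional
$$
\mathcal{E}_\lambda(u,v)=\frac{1}{2}\left(\|u\|^2+\|v\|^2\right)-\lambda\int_{\Omega_\psi}K(q)F(u(q),v(q))\,d\mu(q)=:\Phi(u,v)-\lambda\Psi(u,v),
$$
on $X_{\mathrm{cyl}}$, and check that $\Phi$ is sequentially weakly lower semicontinuous, coercive, and that $\Psi$ is sequentially weakly continuous. Sequential weak continuity of $\Psi$ is where the compact embedding enters: writing $K\in L^\infty\cap L^1$ and using $(f_3)$ (which gives $|F(\eta,\zeta)|\lesssim |\eta|^2+|\zeta|^2+|\eta|^\alpha+|\zeta|^\alpha$ after integration), a bounded sequence in $X_{\mathrm{cyl}}$ converges strongly in $L^2_{\mathrm{loc}}$ and in $L^\alpha$ on the support-weighting provided by $K$, and a standard Nemytskii/dominated-convergence argument yields continuity of $\Psi$.

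The core of the argument is the verification of the hypotheses of the abstract min--max theorem (Theorem~\ref{teorricceri}). The two structural facts to establish are: (i) there is $\rho>0$ with
$$
\sup_{\lambda>0}\ \inf_{\|(u,v)\|\le\rho}\big(\Phi(u,v)-\lambda\Psi(u,v)\big)\ \text{controlled appropriately},
$$
more precisely that
$$
\inf_{(u,v)\in X_{\mathrm{cyl}}}\Phi(u,v)<\inf\Big\{\Phi(u,v): \Psi(u,v)>\text{a suitable level}\Big\},
$$
and (ii) a strict-inequality/``well'' condition comparing the infimum of $\Phi$ on the sublevel sets of $\Psi$. Hypothesis $(f_2)$ gives that near the origin $F(\eta,\zeta)=o(|\eta|^2+|\zeta|^2)$, so $\Psi(u,v)=o(\|(u,v)\|^2)$ as $(u,v)\to 0$ in $X_{\mathrm{cyl}}$ (again via the compact/continuous embeddings and $(f_3)$ to control the higher-order term), which forces $(0,0)$ to be a strict local minimum of $\mathcal{E}_\lambda$ for all small $\lambda$; this is the first ``mountain''. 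Hypothesis $(f_1)$ — the existence of a point with $F>0$ — is used to produce a test function $(u_0,v_0)\in X_{\mathrm{cyl}}$ with $\Psi(u_0,v_0)>0$: one takes cylindrically symmetric bump functions supported in the region $\Omega_0$ from $(h_K)$ where $K$ is bounded below, suitably scaled so that $(u_0,v_0)$ is close to the constant $(\eta_0,\zeta_0)$ on a large portion of $\Omega_0$, making $\int K F(u_0,v_0)\,d\mu>0$. Hypothesis $(f_4)$, with $p_F,q_F\in(0,2)$, guarantees that $\Psi(u,v)\le C(\|u\|^{p_F}+\|v\|^{q_F}+1)$ grows strictly slower than $\Phi$, so $\mathcal{E}_\lambda$ is coercive on $X_{\mathrm{cyl}}$ for every $\lambda>0$ and the sublevel geometry needed by Theorem~\ref{teorricceri} holds; this is also what yields the uniform bound $\|(u,v)\|\le\sigma$ on the solutions, independent of $\lambda$ in $\Lambda$.

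Granting these, Theorem~\ref{teorricceri} produces a nonempty open interval $\Lambda\subset(0,\infty)$ such that for each $\lambda\in\Lambda$ the functional $\mathcal{E}_\lambda$ has at least two nonzero critical points on $X_{\mathrm{cyl}}$ lying in a fixed ball of radius $\sigma$: one of ``local minimum'' type coming from the strict minimum at the origin versus the point $(u_0,v_0)$, and one of ``min--max'' (mountain-pass) type; the Palais--Smale condition required by the abstract theorem follows from coercivity (so PS sequences are bounded) plus the compactness of the embedding, which upgrades weak convergence of a PS sequence to strong convergence after passing to a subsequence, exactly as in the standard subcritical argument. Finally, to conclude that these critical points of $\mathcal{E}_\lambda$ on the symmetric subspace $X_{\mathrm{cyl}}$ are genuine weak solutions of \eqref{p23} on the whole space $HW^{1,2}_0(\Omega_\psi)\times HW^{1,2}_0(\Omega_\psi)$, I would invoke the principle of symmetric criticality (Theorem~\ref{lem3}): the group $\mathbb{U}(n)=U(n)\times\{1\}$ acts isometrically on $HW^{1,2}_0(\Omega_\psi)$ via $(\widehat{\tau}\sharp u)(z,t)=u(\tau^{-1}z,t)$ — isometry being a consequence of the left-invariance of the Haar measure $\mu$ and $\mathbb{U}(n)$-invariance of the horizontal gradient norm — and both $\Phi$ and $\Psi$ are $\mathbb{U}(n)$-invariant because $K$ and $F$ are cylindrically symmetric, so the fixed-point set of the $T$-action (here $T=\mathbb{U}(n)$ itself) is precisely $X_{\mathrm{cyl}}$ and critical points of $\mathcal{E}_\lambda|_{X_{\mathrm{cyl}}}$ are critical points of $\mathcal{E}_\lambda$ on the full product space. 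I expect the main obstacle to be the careful bookkeeping in verifying the precise inequality hypotheses of Theorem~\ref{teorricceri} — in particular exhibiting the test pair $(u_0,v_0)$ with $\Psi(u_0,v_0)$ large enough relative to $\Phi(u_0,v_0)$ to guarantee that the interval $\Lambda$ is nonempty, which is the delicate quantitative heart of the proof; the rest is a routine (if lengthy) assembly of the compactness, semicontinuity, and symmetry ingredients already established in the earlier lemmas.
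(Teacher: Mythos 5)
Your overall strategy is the same as the paper's: work in $Y_T$ with $T=\mathbb{U}(n)$ (so that $Y_T=HW^{1,2}_{0,\mathrm{cyl}}(\Omega_\psi)\times HW^{1,2}_{0,\mathrm{cyl}}(\Omega_\psi)$ and Lemma \ref{lem2} gives the compact embedding), prove weak continuity of $\mathfrak F(u,v)=\int_{\Omega_\psi}K F(u,v)\,d\mu$, get coercivity from $(f_4)$, apply Theorem \ref{teorricceri}, and return to the full space via Theorem \ref{lem3}. However, there is a genuine gap exactly at the step you defer, the minimax inequality $(\Psi_3)$, and with the functional as you set it up the inequality actually fails. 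If one applies Theorem \ref{teorricceri} to $\Psi((u,v),\lambda)=\Phi(u,v)-\lambda\mathfrak F(u,v)$ on $D=[0,\infty)$, then both sides of $(\Psi_3)$ equal $0$: on the right, $\sup_{\lambda\in D}\bigl(\Phi(u,v)-\lambda\mathfrak F(u,v)\bigr)$ equals $\Phi(u,v)$ when $\mathfrak F(u,v)\le 0$ and $+\infty$ otherwise, so the infimum is $0$ (attained at $(0,0)$, since $F(0,0)=0$); on the left, $(f_4)$ gives $\Phi-\lambda\mathfrak F\ge \tfrac12(\|u\|^2+\|v\|^2)-c\lambda(\|u\|^{p_F}+\|v\|^{q_F}+1)$, whose infimum is bounded below by $-C\bigl(\lambda^{2/(2-p_F)}+\lambda^{2/(2-q_F)}+\lambda\bigr)\to 0$ as $\lambda\to 0^{+}$, while it is always $\le 0$; hence $\sup_\lambda\inf=\inf\sup=0$ and the strict inequality is violated. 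The paper avoids this by applying the abstract theorem to the shifted function $\Psi((u,v),\lambda)=\Phi(u,v)-\lambda\mathfrak F(u,v)+\lambda\rho_0$, where $\rho_0$ is chosen through the auxiliary function $f(\xi)=\sup_{\Phi\le\xi}\mathfrak F$, the limit $\lim_{\xi\to0^+}f(\xi)/\xi=0$ (a consequence of Lemma \ref{crescitaF} and Lemma \ref{lem1}), and the cylindrically symmetric bump pair $(u_0,v_0)$ supported in $A_R\subset\Omega_0$, so that \eqref{condrho} holds; the case analysis on the maximizer $\bar\lambda$ relative to $\xi_0/\rho_0$ then yields $(\Psi_3)$. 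This construction is precisely the ``quantitative heart'' you left out, and without it (or an equivalent device) the application of Theorem \ref{teorricceri} does not go through.

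A second, related misreading: Theorem \ref{teorricceri} produces, for $\lambda\in\Lambda$ and small $\mu$, \emph{two local minima} of $\mathcal E_{\lambda,\mu}=\Psi(\cdot,\lambda)+\mu\Theta$ in the sublevel set $\{\Psi(\cdot,\lambda)<\zeta\}$; no Palais--Smale condition is assumed or verified anywhere in the paper's proof, and there is no mountain-pass critical point. Your description (strict local minimum at the origin, a mountain-pass point, and PS deduced from coercivity plus compactness) is a different mechanism that you do not substantiate: you never show $\mathcal E_\lambda(u_0,v_0)<0$ for $\lambda$ ranging over an open set, nor do you obtain the uniform radius $\sigma$ from that route. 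In the paper the bound $\sigma$ comes from choosing a compact interval $[a,b]\subset\Lambda$ and noting that $\bigcup_{\lambda\in[a,b]}\{\Psi(\cdot,\lambda)\le\zeta\}$ is contained in the union of two coercive sublevel sets, hence bounded. The symmetric-criticality and compactness parts of your plan are correct and coincide with the paper's argument.
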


\indent The manuscript is organized as follows. In Section~\ref{sec2}
we give some notations and we recall some properties of the functional space we work in.
In order to apply critical point methods to problem \eqref{p2}, we need to work in a subspace of the functional space $X=HW^{1,2}_0(\Omega_\psi)\times HW^{1,2}_0(\Omega_\psi)$.
 In particular, we give some tools which will be useful along the paper (see Lemmas \ref{crescitaF} and \ref{regularity}). Finally, in Section~\ref{MT} we study system~\eqref{p2} and prove our multiplicity result (see Theorem \ref{th1}). An example of an application is given in Example \ref{esempio}.

  \indent For general references on the subject and on methods used in the paper we refer
to the monographs \cite{BLU,KRV}, as  well as papers \cite{dAM,MZZ,MBF,V} and the references therein. See also \cite{GV,MBF,MolicaPucci,MBP,MBR} for related topics.

\section{Abstract framework}\label{sec2}
In this section we briefly recall some basic facts on the Heisenberg group and the functional Folland--Stein space~$HW^{1,2}_0(\Omega_\psi)$; see \cite{fol, folStein}.
The simplest example of Carnot
group of step two is provided by the Heisenberg group
$\mathbb H^n$ of topological dimension $m=2n+1$ and homogeneous dimension $Q=2n+2$, that is the Lie group whose underlying manifold is $\mathbb R^{2n+1}$, endowed with the non--Abelian group law
$$
q\circ q'=\biggl(z+z',t+t'+2\sum_{i=1}^n (y_ix_i'-x_iy_i')\biggr)
$$
for all $q$, $q'\in \mathbb H^n$, with
$$q=\!(z,t)\!=(x_1,\dots,x_n,y_1,\dots,y_n,t),\quad q'=\!(z',t')\!=(x_1',\dots,x_n',y_1',\dots,y_n',t').$$
The vector fields for $j=1,\dots,n$
\begin{equation}\label{eq:1}
X_j=\frac {\partial} {\partial x_j}+2y_j\frac {\partial} {\partial t}, \qquad Y_j=\frac {\partial} {\partial y_j}-2x_j\frac {\partial} {\partial t}, \qquad Z=\frac {\partial} {\partial t},
\end{equation}
constitute a basis $\mathcal B^*$ for the real graded Lie algebra $\mathfrak{H}=\bigoplus_{k=1}^{2}\mathfrak{H}_k$ of left invariant vector fields on $\mathbb H^n$. More precisely, the first graded component
$
\mathfrak{H}_1
$
is generated by $\mathcal B^*_1=\{X_j,Y_j:j=1,...,2n\}$
and the second graded component
$
\mathfrak{H}_2
$
is generated by $\mathcal B^*_2=\{Z\}$.
The basis $\mathcal B^*$ satisfies the Heisenberg canonical commutation relations for position and momentum
$[X_j,Y_k]=-4 \delta_{jk}{\partial}/ {\partial t}$, all other commutators are zero.

The natural inner product in the span of $\{X_j,Y_j\}_{j=1}^n$
$$
\big(W, Z\big)_{\mathbb H^n}=\sum_{j=1}^n \left( w^jz^j + \widetilde w^j\widetilde z^j\right)
$$
for  $W=\{ w^jX_j + \widetilde w^jY_j\}_{j=1}^n$ and $Z=\{z^jX_j + \widetilde z^jY_j\}_{j=1}^n$ produces the Hilbertian norm
$$
|D_{\mathbb H^n}u|_{\mathbb H^n}=\sqrt{\big(D_{\mathbb H^n}u,
D_{\mathbb H^n}u\big)_{\mathbb H^n}}
$$
for the horizontal vector field $D_{\mathbb H^n}u$. Moreover, if also $v\in C^1(\mathbb H^n)$ then  the  Cauchy--Schwarz inequality
$$
\big|\big(D_{\mathbb H^n}u,
D_{\mathbb H^n}v\big)_{\mathbb H^n}\big|_{\mathbb H^n}\leq |D_{\mathbb H^n}u|_{\mathbb H^n}|D_{\mathbb H^n}v |_{\mathbb H^n}
$$
continues to be valid.

For any horizontal vector field
$W=\{ w^jX_j + \widetilde w^jY_j\}_{j=1}^n$ of class $C^1(\mathbb H^n;\mathbb R^{2n})$ {\em the horizontal divergence} is defined by
$$
 {\rm div}_HW=\sum_{j=1}^n [X_j(w^j)+Y_j( \widetilde w^j)].
$$
If furthermore $g\in C^1(\mathbb R)$, then the {\em Leibnitz formula} holds, namely
$$
{\rm div}_H(gW)=g{\rm div}_H(W)+\big(D_{\mathbb H^n}g, W\big)_{\mathbb H^n}.
$$

If $u\in C^2(\mathbb H^n)$, then  the
{\em horizontal Laplacian in} $\mathbb H^n$ of $u$, called
the {\em Kohn--Spencer Laplacian},  is defined as  follows
\begin{align*}
\Delta_{\mathbb H^n}u&=\sum_{j=1}^n (X_j^2+Y_j^2)u\\
&=\sum_{j=1}^n \left(\frac{\partial^2} {\partial x_j^2}+\frac{\partial^2} {\partial y_j^2}+4y_j\frac{\partial^2} {\partial x_j\partial t}-4x_j\frac{\partial^2} {\partial y_j\partial t}\right)u+4|z|^2\frac{\partial^2 u} {\partial t^2},
 \end{align*}
and $\Delta_{\mathbb H^n}$ is {\em hypoelliptic} according to the celebrated  Theorem~1.1 due to {\em H\"ormander} ~\cite{hor}.

Going back to \eqref{p2}, we need to introduce a suitable solution space.
  Let $\Omega$ be a nontrivial open subset of $\mathbb H^n$. The Folland--Stein horizontal Sobolev space $HW^{1,2}_0(\Omega)$ is the completion of $C^{\infty}_0(\Omega)$, with respect to the Hilbertian norm
   \begin{equation}\label{Norma}\begin{gathered}
\|u\|_{HW^{1,2}_0(\Omega)}= \left(  \int_{\Omega} |D_{\mathbb H^n}u(q)|_{\mathbb H^n}^2 d\mu(q)+ \int_{\Omega} |u(q)|^2 d\mu(q)\right)^{1/2},\\
\langle u,\varphi\rangle= \int_{\Omega}\!\langle D_{\mathbb H^n}u(q), D_{\mathbb H^n} \varphi(q)\rangle_{\mathbb H^n}\!\,d\mu(q)+ \int_{\Omega}u(q)\varphi(q) d\mu(q).
\end{gathered}
\end{equation}

Of course, if $\Omega=\mathbb H^n$, then $HW^{1,2}(\mathbb H^n)=HW^{1,2}_0(\mathbb H^n)$, where $HW^{1,2}(\mathbb H^n) $ denotes the horizontal Sobolev space of the functions $u \in L^2(\mathbb H^n)$ such that $D_{\mathbb H^n}u$ exists in the sense of distributions and $|D_{\mathbb H^n}u|_{\mathbb H^n}$ is in $L^2(\mathbb H^n)$, endowed with the Hilbertian norm~\eqref{Norma}.

Since $\nu\in [0,C^{-1}_Vn^2)$, condition $(h_V)$ in addition to the Hardy--Sobolev inequality \eqref{H} and relation \eqref{H2}, gives that the norm $\|\cdot\|_{HW^{1,2}_0(\Omega_\psi)}$ which is equivalent to the norm given by
\begin{equation}\label{Normaeq}
\begin{gathered}
\|u\|=\left(\|u\|_{HW^{1,2}_0(\Omega_\psi)}^2-\nu\int_{\Omega_\psi}V(q)|u(q)|^2d\mu(q)\right)^{1/2},
\end{gathered}
\end{equation}
for every $u\in HW^{1,2}_0(\Omega_\psi)$.

More precisely, one has

\begin{equation*}\label{Normaeqrel}
\begin{gathered}
\frac{\sqrt{n^2-\nu C_V}}{n}\|u\|_{HW^{1,2}_0(\Omega_\psi)}\leq \|u\|\leq \|u\|_{HW^{1,2}_0(\Omega_\psi)},
\end{gathered}
\end{equation*}

\noindent for every $u\in HW^{1,2}_0(\Omega_\psi)$.

Take $q_1$, $q_2\in \mathbb H^n$ and let $H\Gamma_{q_1,q_2}(\mathbb H^n)$ be the set of piecewise smooth curves $\gamma$, such that $\gamma: [0,1]\rightarrow \mathbb H^n$, $\dot\gamma(t)\in \mathfrak{H}_1$ a.e. $t\in [0,1]$, $(\gamma(0),\gamma(1))=(q_1,q_2)$ and
 $$\int_0^{1}|\dot\gamma(t)|_{\mathbb H^n}dt<\infty.$$
Since $H\Gamma_{q_1,q_2}(\mathbb H^n)\neq \emptyset$ by the celebrated Chow--Rashevski\u\i\ theorem ~\cite{C}, it is possible to define the \textit{Carnot--Carath\'{e}odory distance} on $\mathbb H^n$, as follows
$$
d_{CC}(q_1,q_2)=\inf_{\gamma\in H\Gamma_{q_1,q_2}(\mathbb H^n)}\int_0^{1}|\dot\gamma(t)|_{\mathbb H^n}dt,
$$
see \cite{R} for details.\par

In order to use a variational approach for studying problem \eqref{p2}, we need to work in a
special functional space. Indeed, one of the difficulties in treating our problem is
related to the lack of a compact embedding of $HW^{1,2}_0(\Omega_\psi)$ into suitable Lebesgue spaces.
In this respect the standard subelliptic Sobolev spaces are not enough in
order to study the problem. We overcome this difficulty by working in a new functional
space, whose definition will be given below.

Let $(T, \cdot)$ be a closed topological group with neutral element
$\jmath$. The group $T$ is said to \textit{act continuously} on $\mathbb H^n$, if there exists a map $\star:T\times\mathbb H^n\rightarrow \mathbb H^n$ such that the following conditions hold:
\begin{itemize}
\item[$(T_1)$]$j\star q=q$ {\em for every} $q\in \mathbb H^n$;
\item[$(T_2)$]${\tau}_1\star({\tau}_2\star q)=({\tau}_1\cdot{\tau}_2)\star q$ {\em for every ${\tau}_1,$ ${\tau}_2\in T$ and} $q\in \mathbb H^n$.
\end{itemize}
In addition, the action $\star$ is called \textit{left distributed} if
\begin{itemize}
\item[$(T_3)$]${\tau}\star(p\circ q)=({\tau}\star p)\circ({\tau} \star q)$ {\em for every
${\tau}\in T$ and} $p,q\in \mathbb H^n$.
\end{itemize}
A set $\Omega\subset \mathbb H^n$ is said to be $T$--\textit{invariant}, with respect to the action $\star$, if
$T\star\Omega=\Omega$.

Let us consider $T=(T, \cdot)$ be a
closed infinite topological group acting continuously and left--distributively on $\mathbb H^n$ by the map $\star:T\times\mathbb H^n\rightarrow\mathbb H^n$. Assume
that $T$ acts isometrically on the horizontal Folland--Stein space $HW^{1,2}_0(\mathbb H^n)$, where the action $\sharp:T\times HW^{1,2}_0(\mathbb H^n)\rightarrow HW^{1,2}_0(\mathbb H^n)$ is defined
for every $({\widehat{\tau}},u)\in T\times HW^{1,2}_0(\mathbb H^n)$ by
\begin{equation*}\label{action}
({\widehat{\tau}}\sharp u)(q)=u({\widehat{\tau}}^{-1}\star q)\quad\mbox{for  all } q\in\mathbb H^n.
\end{equation*}
In what follows, $\mu$ is the natural Haar measure on $\mathbb H^n$ while $``\liminf"$ is the
Kuratowski lower limit of sets.

Let $\Omega$ be {a nonempty open $T$--invariant subset of $\mathbb H^n$, with boundary $\partial\Omega$, and assume that}
\begin{itemize}
\item[$(\mathcal{H})$] {\it for every $(q_k)_k\subset \mathbb H^n$ such that
 \begin{gather*}\lim_{k\rightarrow \infty}d_{CC}(e,q_k)=\infty\quad\mbox{and}\quad\mu\left(\displaystyle\liminf_{k\rightarrow \infty}(q_k\circ \Omega)\right)>0,\end{gather*}
 where $q_k\circ \Omega=\{q_k\circ q\,:\,q\in \Omega\}$,
    there exist a subsequence $(q_{k_{j}})_j$ of $(q_k)_k$ and a sequence of subgroups $(T_{q_{k_{j}}})_{j}$ of $T$, with cardinality $\mbox{\rm card}(T_{q_{k_{j}}})=\infty$, having the property that for all ${\widehat{\tau}}_1$, ${\widehat{\tau}}_2\in T_{q_{k_{j}}}$, with ${\widehat{\tau}}_1\neq{\widehat{\tau}}_2$, the following holds:
    $$
    \lim_{j\rightarrow \infty}\inf_{q\in \mathbb H^n}d_{CC}(({\widehat{\tau}}_1\star q_{k_j})\circ q,({\widehat{\tau}}_2\star q_{k_j})\circ q)=\infty.
    $$}
\end{itemize}

A domain $\Omega$ of $\mathbb H^n$, for which condition $(\mathcal H)$ holds, is simply called an
$\mathcal H$ {\it domain}. Let us denote $\mathbb{U}(n)=U(n)\times \{1\}$, where
$$
U(n)=U(n,\mathbb{C})=\{\tau\in GL(n;\mathbb{C})\,:\,\langle\tau z,\tau z'\rangle_{\mathbb C^n}= \langle z,z'\rangle_{\mathbb C^n} \mbox{ for all } z,z'\in \mathbb C^n\},
$$
that is, $U(n)$ is the usual unitary group. Here $\langle\cdot,\cdot\rangle_{\mathbb C^n}$ denotes the standard Hermitian product on $\mathbb C^n$, in other words $\langle z,z'\rangle_{\mathbb C^n}=\sum_{k=1}^nz_k\cdot\overline{z'_k}$.

Hence,  $\mathbb{U}(n)$ is the unitary group endowed with the natural multiplication law $\cdot:\mathbb{U}(n)\times \mathbb{U}(n)\rightarrow \mathbb{U}(n)$,
which acts continuously and left--distributively on $\mathbb H^n$  by the
map $* : \mathbb{U}(n)\times \mathbb{H}^n \to \mathbb{H}^n$, defined by
$$\widehat{\tau}*q=(\tau z,t)\quad\mbox{for all $\widehat{\tau}=(\tau,1)\in \mathbb{U}(n)$,\,\,}q=(z,t)\in\mathbb H^n,$$
thanks to~\cite[Lemma 3.1]{bk}. If we take $T=\mathbb{U}(n)$, then
$\Omega_\psi$ is $\mathbb{U}(n)$--invariant and an $\mathcal H$ domain, as shown in the proof of
of~\cite[theorem 1.1]{bk}. Moreover,
\begin{equation*}
HW^{1,2}_{0,\mathbb{U}(n)}(\Omega_\psi)=\{u\in HW^{1,2}_0(\Omega_\psi): u(z,t)=u(|z|,t)\mbox{ for all  }q=(z,t)\in\Omega_\psi\},
\end{equation*}
that is $HW^{1,2}_{0,\mathbb{U}(n)}(\Omega_\psi)=HW^{1,2}_{0, \rm cyl}(\Omega_\psi)$ is the space of cylindrically symmetric functions of $HW^{1,2}_0(\Omega_\psi)$.

Finally,
$\mathbb{U}(n)$ acts isometrically on the horizontal Folland--Stein space $HW^{1,2}_0(\mathbb H^n)$, where the action $\sharp:\mathbb{U}(n)\times HW^{1,2}_0(\mathbb H^n)\rightarrow HW^{1,2}_0(\mathbb H^n)$ is defined
for every $(\widehat{\tau},u)$ in~$\mathbb{U}(n)\times HW^{1,2}_0(\mathbb H^n)$ by
\begin{equation}\label{action}
(\widehat{\tau}\sharp u)(q)=u(\widehat{\tau}^{-1}*q)=u({\tau}^{-1}z,t)\quad\mbox{for  all } q=(z,t)\in\mathbb H^n,
\end{equation}
by force of~\cite[Lemma~3.2]{bk}.

Now, let $T=U(n_1)\times...\times U(n_\ell)\times\{1\}$, where $n=\sum_{i=1}^{\ell}n_i$, with $n_i\geq 1$ and $\ell\geq 1$ and consider the closed subspace
\begin{equation*}
HW^{1,2}_{0,T}(\Omega_\psi)=\{u\in HW^{1,2}_0(\Omega_\psi)\,:\, {\widehat{\tau}}\sharp u=u\mbox{ for all }
\widehat{\tau}\in T\}.
\end{equation*}
By keeping the same notation, we naturally
extend the function $u\in HW^{1,2}_{0,T}(\Omega_\psi)$ to the entire group $\mathbb{H}^n$ by zero on $\mathbb{H}^n\setminus \Omega_\psi$.

\subsection{Sobolev embedding results}

Following Folland and Stein \cite{folStein}, we can easily deduce the following embedding property that will be crucial in this paper.

\begin{lemma}\label{lem1}
Let $q \in [2,2^*]$ and $T=U(n_1)\times...\times U(n_\ell)\times\{1\}$, where $n=\sum_{i=1}^{\ell}n_i$, with $n_i\geq 1$ and $\ell\geq 1$. Then the embeddings
$$
HW^{1,2}_{0,T}(\Omega_\psi) \hookrightarrow HW^{1,2}_0(\Omega_\psi) \hookrightarrow L^q (\Omega_\psi)
$$
are continuous. Hence there exists a constant $k_q$ such that
\begin{equation*}
\|u\|_q=\|u\|_{L^q (\Omega_\psi)} \le k_q \|u\|_{HW^{1,2}_0(\Omega_\psi)}\quad\text{for all }
u \in HW^{1,2}_{0}(\Omega_\psi),
\end{equation*}
where $k_q$ depends on $q$ and $n$.

Moreover, since the norms $\|\cdot\|_{HW^{1,2}_0(\Omega_\psi)}$ and $\|\cdot\|$ are equivalent, there exists a constant $$c_q=k_q\frac{n}{\sqrt{n^2-\nu C_V}},$$ such that
\begin{equation}\label{eq3}
\|u\|_q\le c_q \|u\|,
\end{equation}
for every $u\in HW^{1,2}_{0}(\Omega_\psi)$.
\end{lemma}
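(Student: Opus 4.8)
The plan is to reduce everything to the classical Folland--Stein subelliptic Sobolev inequality on the whole group $\mathbb{H}^n$ and to transfer the estimates to $\Omega_\psi$ by a zero--extension argument, treating intermediate exponents by interpolation. First I would dispose of the inclusion $HW^{1,2}_{0,T}(\Omega_\psi)\hookrightarrow HW^{1,2}_0(\Omega_\psi)$: by definition $HW^{1,2}_{0,T}(\Omega_\psi)$ is a closed linear subspace of $HW^{1,2}_0(\Omega_\psi)$ carrying the restriction of the norm $\|\cdot\|_{HW^{1,2}_0(\Omega_\psi)}$, hence this inclusion is an isometry, in particular continuous. Next, since $\Omega_\psi$ is an open subset of $\mathbb{H}^n$ one has $C^\infty_0(\Omega_\psi)\subset C^\infty_0(\mathbb{H}^n)$, and extending by zero outside $\Omega_\psi$ produces an isometric embedding $HW^{1,2}_0(\Omega_\psi)\hookrightarrow HW^{1,2}_0(\mathbb{H}^n)=HW^{1,2}(\mathbb{H}^n)$; in what follows the same symbol $u$ denotes a function and its zero--extension.

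The core step is then to invoke the Folland--Stein inequality~\cite{folStein}: there is a constant $S=S(Q)>0$, depending only on the homogeneous dimension $Q=2n+2$, such that
\begin{equation*}
\|u\|_{L^{2^*}(\mathbb{H}^n)}\le S\,\|D_{\mathbb{H}^n}u\|_{L^2(\mathbb{H}^n)}\le S\,\|u\|_{HW^{1,2}_0(\mathbb{H}^n)}
\end{equation*}
for all $u\in C^\infty_0(\mathbb{H}^n)$, and hence, by density, for all $u\in HW^{1,2}_0(\mathbb{H}^n)$. Restricting this to zero--extensions of functions of $HW^{1,2}_0(\Omega_\psi)$ gives $\|u\|_{L^{2^*}(\Omega_\psi)}\le S\,\|u\|_{HW^{1,2}_0(\Omega_\psi)}$, i.e.\ the assertion with $k_{2^*}=S$. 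For $q=2$ the bound $\|u\|_{L^2(\Omega_\psi)}\le \|u\|_{HW^{1,2}_0(\Omega_\psi)}$ is immediate from the definition of the norm in~\eqref{Norma}, so $k_2=1$. For $q\in(2,2^*)$ I would use the elementary interpolation inequality for Lebesgue norms: writing $\tfrac1q=\tfrac{1-\theta}{2}+\tfrac{\theta}{2^*}$ with $\theta=\theta(q)\in(0,1)$, one has
\begin{equation*}
\|u\|_{L^q(\Omega_\psi)}\le \|u\|_{L^2(\Omega_\psi)}^{1-\theta}\,\|u\|_{L^{2^*}(\Omega_\psi)}^{\theta}\le S^{\theta}\,\|u\|_{HW^{1,2}_0(\Omega_\psi)},
\end{equation*}
valid on the (a priori infinite) measure space $(\Omega_\psi,\mu)$; thus one may take $k_q=S^{\theta(q)}$, which depends only on $q$ and on $n$ (through $Q=2n+2$). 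Composing with the first, isometric, embedding yields continuity of $HW^{1,2}_{0,T}(\Omega_\psi)\hookrightarrow L^q(\Omega_\psi)$ with the same constant $k_q$.

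Finally, for the estimate~\eqref{eq3} I would simply combine the inequality just obtained with the norm equivalence recorded before the statement (which follows from $(h_V)$, the Hardy--Sobolev inequality~\eqref{H} and relation~\eqref{H2}, using $\nu\in[0,C_V^{-1}n^2)$, and reads $\|u\|_{HW^{1,2}_0(\Omega_\psi)}\le \tfrac{n}{\sqrt{n^2-\nu C_V}}\,\|u\|$). This gives $\|u\|_q\le k_q\|u\|_{HW^{1,2}_0(\Omega_\psi)}\le k_q\tfrac{n}{\sqrt{n^2-\nu C_V}}\|u\|=c_q\|u\|$, as desired. I do not expect a genuine obstacle here; the only point requiring care is that $\Omega_\psi$ may be unbounded, indeed of infinite $\mu$--measure, so one cannot pass from $L^{2^*}$ to $L^q$ via H\"older against $\mu(\Omega_\psi)$ — the interpolation inequality above is used instead, and the extension--by--zero step is exactly what makes the global Folland--Stein inequality applicable to functions supported in $\Omega_\psi$.
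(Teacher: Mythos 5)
Your proof is correct and follows essentially the same route the paper intends: the paper states Lemma \ref{lem1} without detailed proof, deducing it "following Folland and Stein," and your argument (zero--extension to $\mathbb{H}^n$, the Folland--Stein inequality for $q=2^*$, the trivial case $q=2$, Lebesgue interpolation for $q\in(2,2^*)$, and the recorded norm equivalence to pass from $k_q$ to $c_q=k_q\,n/\sqrt{n^2-\nu C_V}$) is exactly the standard filling--in of that deduction. No gaps.
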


On the other hand, by \cite{GarN,IV, Vas} we know that if $\mathcal O$
is a bounded open  set of $\mathbb H^n$ then the embedding
\begin{equation}\label{eq:25}
HW^{1,2}_0(\mathcal O)\hookrightarrow L^{q}(\mathcal O)
\end{equation}
is compact for all $q$, with $1\le q<2^*$.\par

Moreover, by \cite[Theorems 1.1 and 3.1]{bk}, the main compactness statement reads as follows.

\begin{lemma}\label{lem2}
Let $T=U(n_1)\times...\times U(n_\ell)\times\{1\}$, where $n=\sum_{i=1}^{\ell}n_i$, with $n_i\geq 1$ and $\ell\geq 1$. Then the embedding $$HW^{1,2}_{0,T}(\Omega_\psi)\hookrightarrow L^q(\Omega_\psi)$$ is compact for any $q\in (2,2^*)$. We also have that
$$
HW^{1,2}_{0,T}(\Omega_\psi)=Fix_T(HW^{1,2}_0(\Omega_\psi)),
$$
where
$$
Fix_T(HW^{1,2}_0(\Omega_\psi))=\{u\in HW^{1,2}_0(\Omega_\psi): {\widehat{\tau}}\sharp u=u\,\, \mbox{\rm for all } \widehat{\tau}\in T\},
$$
and
$\sharp:T\times HW^{1,2}_0(\mathbb H^n)\rightarrow HW^{1,2}_0(\mathbb H^n)$ is the action defined in \eqref{action}.
\end{lemma}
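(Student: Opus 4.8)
The plan is to prove the two assertions in turn. The identity $HW^{1,2}_{0,T}(\Omega_\psi)=Fix_T(HW^{1,2}_0(\Omega_\psi))$ is essentially a matter of unwinding the definitions: the only thing to check is that, since $\Omega_\psi$ is $T$--invariant and each $\widehat{\tau}\star(\cdot)$ is a sub--Riemannian isometry of $\mathbb H^n$ fixing the vertical variable (a unitary rotation of the $z$--variable) under which the Haar measure $\mu$ is invariant, the action $\sharp$ maps $HW^{1,2}_0(\Omega_\psi)$ (functions extended by zero to $\mathbb H^n$) into itself and does so isometrically; it suffices to verify this on the dense subspace $C^\infty_0(\Omega_\psi)$ and pass to the completion. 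Granting this, the subspace of $\sharp$--fixed elements is by the very definition $HW^{1,2}_{0,T}(\Omega_\psi)$, and it is a closed, hence weakly closed, linear subspace.

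For the compact embedding $HW^{1,2}_{0,T}(\Omega_\psi)\hookrightarrow L^q(\Omega_\psi)$, $q\in(2,2^*)$, I would argue as follows. Let $(u_k)_k$ be bounded in $HW^{1,2}_{0,T}(\Omega_\psi)$. By Lemma \ref{lem1} and reflexivity, after passing to a subsequence $u_k\rightharpoonup u$ weakly, with $u$ in this weakly closed subspace; moreover, exhausting $\mathbb H^n$ by Carnot--Carath\'eodory balls $B(\xi,R)=\{q\in\mathbb H^n:d_{CC}(\xi,q)<R\}$ and using the local compact embedding \eqref{eq:25} together with a diagonal argument, $u_k\to u$ in $L^q$ on every bounded subset of $\mathbb H^n$. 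Setting $w_k:=u_k-u$, one has $w_k\rightharpoonup 0$, $w_k$ is $T$--invariant, $w_k$ (extended by zero) is supported in $\overline{\Omega_\psi}$, and $(w_k)$ is bounded in $HW^{1,2}_0(\mathbb H^n)$; it remains to show $w_k\to 0$ in $L^q(\Omega_\psi)$ for the fixed $q\in(2,2^*)$. Here I would use a Lions--type concentration alternative on $\mathbb H^n$: either
$$
\lim_{k\to\infty}\ \sup_{\xi\in\mathbb H^n}\int_{B(\xi,R)}|w_k|^2\,d\mu=0\quad\text{for some }R>0,
$$
in which case $w_k\to 0$ in $L^s(\mathbb H^n)$ for every $s\in(2,2^*)$ and we are done; or, after passing to a further subsequence, there exist $R>0$, $\beta>0$ and points $q_k\in\mathbb H^n$ with $\int_{B(q_k,R)}|w_k|^2\,d\mu\ge\beta$, in which case $d_{CC}(e,q_k)\to\infty$ because $w_k\to 0$ in $L^2$ on bounded sets.

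The second case is the heart of the matter, and is where hypothesis $(\mathcal H)$ enters. Since $w_k$ is supported in $\overline{\Omega_\psi}$, H\"older's inequality and the uniform $L^{2^*}$--bound on $w_k$ upgrade the mass lower bound to a measure lower bound $\mu\big(B(q_k,R)\cap\Omega_\psi\big)\ge c>0$; translating the $w_k$ back towards the origin and invoking the local compact embedding once more, the translates converge (along a subsequence) to a \emph{nonzero} limit supported in $\liminf_k(q_k^{-1}\circ\Omega_\psi)$, so this Kuratowski lower limit has positive $\mu$--measure. Then $(\mathcal H)$, applied to $(q_k^{-1})_k$ so that its hypotheses hold, produces a subsequence and infinite subgroups $T_j\subset T$ whose distinct elements, after the relevant translations, are pairwise $d_{CC}$--separated as $j\to\infty$. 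Using $T$--invariance of $w_{k_j}$ one gets $\int_{\widehat{\tau}\star B(q_{k_j},R)}|w_{k_j}|^2\,d\mu=\int_{B(q_{k_j},R)}|w_{k_j}|^2\,d\mu\ge\beta$ for every $\widehat{\tau}\in T_j$, and each isometry $\widehat{\tau}\star(\cdot)$ carries $B(q_{k_j},R)$ onto $B(\widehat{\tau}\star q_{k_j},R)$; for $j$ large, $N_j\to\infty$ of these balls are pairwise disjoint, whence $\|w_{k_j}\|_{L^2(\mathbb H^n)}^2\ge N_j\,\beta\to\infty$, contradicting the uniform bound $\|w_{k_j}\|_{L^2}\le k_2\|w_{k_j}\|_{HW^{1,2}_0}$. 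Hence the vanishing alternative must occur, $w_k\to 0$ in $L^q(\Omega_\psi)$, and compactness follows.

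I expect the main obstacle to be exactly this transplantation argument — turning the abstract geometric condition $(\mathcal H)$ (infinitely many ``nearly orthogonal'' subgroup translates near each escaping location) into genuinely disjoint regions on which $T$--invariance reproduces a fixed amount of $L^2$--mass. The delicate book-keeping (which points, and whether one uses $q_k$ or $q_k^{-1}$, left versus right translations, the subsequence extractions needed for the Kuratowski $\liminf$, and extracting $N_j\to\infty$ \emph{simultaneously} disjoint balls out of merely pairwise separation) is precisely what is carried out in \cite{bk}, on whose Theorems 1.1 and 3.1 we ultimately rely.
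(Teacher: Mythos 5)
Your sketch is sound in outline, but it is worth pointing out that the paper itself contains no proof of Lemma \ref{lem2}: the compact embedding is imported wholesale from \cite[Theorems 1.1 and 3.1]{bk}, together with the facts (also taken from \cite{bk}) that $\Omega_\psi$ is a $\mathbb{U}(n)$--invariant $\mathcal H$ domain and that $T$ acts isometrically on $HW^{1,2}_0(\mathbb H^n)$, while the identity $HW^{1,2}_{0,T}(\Omega_\psi)=Fix_T(HW^{1,2}_0(\Omega_\psi))$ holds by the very definition of $HW^{1,2}_{0,T}(\Omega_\psi)$ given in Section \ref{sec2}, so there, as you say, only the well--definedness and isometry of $\sharp$ need checking. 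What you wrote is therefore not the paper's argument but a reconstruction of the proof of \cite[Theorem 3.1]{bk}: the vanishing/non--vanishing dichotomy, the local compactness \eqref{eq:25}, the verification that $\mu\bigl(\liminf_k(q_k^{-1}\circ\Omega_\psi)\bigr)>0$ via the translated subsequence, and the use of $(\mathcal H)$ plus $T$--invariance to duplicate a fixed amount of $L^2$--mass on many disjoint balls. The bookkeeping you flag does close up: since the action is left distributed and fixes the origin, $\widehat{\tau}\star q^{-1}=(\widehat{\tau}\star q)^{-1}$, so with $a=\widehat{\tau}_1\star q_{k_j}^{-1}$, $b=\widehat{\tau}_2\star q_{k_j}^{-1}$ and the choice $q=a^{-1}$ in the infimum of $(\mathcal H)$ (applied to the sequence $(q_{k_j}^{-1})_j$) one gets $d_{CC}(\widehat{\tau}_1\star q_{k_j},\widehat{\tau}_2\star q_{k_j})=d_{CC}(a^{-1},b^{-1})\geq\inf_{q}d_{CC}(a\circ q,b\circ q)\to\infty$, which is exactly the separation of centers needed for your disjoint balls; the remaining delicacy (extracting arbitrarily many \emph{simultaneously} disjoint balls from pairwise separation, and the limit exchanges) is precisely what \cite{bk} carries out. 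In short: the paper's route is a citation, yours is a proof sketch of the cited theorem; the citation buys the full Carnot--group generality recorded in Remark \ref{osservo}, while your argument buys a self--contained picture of where $(\mathcal H)$ and the symmetry actually enter.
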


\begin{remark}\label{osservo}\rm{The above lemma is a consequence of a more general result stated in \cite[Theorem 3.1]{bk}.
More precisely,
let $\mathbb{G}=(\mathbb{G}, \circ)$ be a Carnot group of step $r$ and homogeneous dimension $Q>2$, with the neutral element denoted by $e$. Let $T=(T, \cdot)$ be a
closed infinite topological group acting continuously and left distributively on $\mathbb{G}$ by the map $\circledast:T\times \mathbb G\rightarrow\mathbb G$. Assume furthermore
that $T$ acts isometrically on $HW^{1,2}_0(\mathbb{G})$, where the action $\natural: T\times HW^{1,2}_0(\mathbb{G})\rightarrow HW^{1,2}_0(\mathbb{G})$ is~defined by
\begin{equation*}
(\widehat{\tau}\natural u)(q)=u(\widehat{\tau}^{-1}\circledast q)\quad\mbox{for  all } q\in\mathbb G.
\end{equation*}
Let $\mathbb{G}_0$ be an $\mathcal{H}$ domain (see \cite{MolicaPucci}). Then, the following embedding
$$
HW^{1,2}_{0,T}(\mathbb{G}_0)\hookrightarrow L^q (\mathbb{G}_0)
$$
is compact for every $q \in (2,2^*)$. This result was inspired by Tintarev and Fieseler \cite{TF}.}
\end{remark}

\subsection{Weak formulation and $T$--invariance}\label{sottosezione}

The natural solution space for \eqref{p2} is
$$\begin{aligned}
X=HW^{1,2}_0(\Omega_\psi)\times HW^{1,2}_0(\Omega_\psi),
\end{aligned}$$
with associated norm
$$\begin{aligned}
\|(u,v)\|=\|u\|+\|v\|.
\end{aligned}$$
Let us consider the action $\pi_{\sharp}:T\times X\rightarrow X$ given by
$$
\pi_{\sharp}(\widehat{\tau}, (u,v))=(\widehat{\tau}\sharp u, \widehat{\tau}\sharp v),
$$
for every $\widehat{\tau}\in T$ and $(u,v)\in X$.

The above definition immediately yields
$$
Fix_T(X)=Fix_T(HW^{1,2}_0(\Omega_\psi))\times Fix_T(HW^{1,2}_0(\Omega_\psi)),
$$
where
$$
Fix_T(X)=\{(u,v)\in X:\pi_{\sharp}(\widehat{\tau}, (u,v))=(u,v)\,\, \mbox{\rm for all } \widehat{\tau}\in T\}.
$$

A function $(u,v)\in X$  is said to be a (\textit{weak})
{\em  solution} of problem~\eqref{p2} if
\begin{align}\label{weaksol}
\langle u,\varphi\rangle_{X,\nu}+\langle v,\psi\rangle_{X,\nu} & = \lambda\int_{\Omega_\psi}K(q)\partial_1F(u(q),v(q))\varphi(q) d\mu(q)\nonumber\\
& \quad+\lambda\int_{\Omega_\psi}K(q)\partial_2F(u(q),v(q))\varphi(q) d\mu(q)\\
& \quad+\mu\int_{\Omega_\psi}\partial_1G(q,u(q),v(q))\psi(q) d\mu(q)\nonumber\\
& \quad+\mu\int_{\Omega_\psi}\partial_2G(q,u(q),v(q))\psi(q) d\mu(q)\nonumber,
\end{align}
\noindent for any $(\varphi,\psi)\in X$, where we set
$$
\langle u,\varphi\rangle_{X,\nu}=\langle u,\varphi\rangle-\nu \int_{\Omega_\psi}V(q)u(q)\varphi(q)d\mu(q),
$$
and
$$
\langle v,\psi\rangle_{X,\nu}=\langle v,\psi\rangle-\nu \int_{\Omega_\psi}V(q)v(q)\psi(q)d\mu(q).
$$

\indent Problem \eqref{p2} has a variational nature
and the Euler--Lagrange functional $I_{\lambda,\mu}:X\rightarrow \R$ associated to~\eqref{p2}~is given by
$$
I_{\lambda,\mu}(u,v)=\frac{1}{2}\left(\|u\|^2+\|v\|^2\right)-\lambda\int_{\Omega_\psi}\!\!K(q)F(u(q),v(q))d\mu(q)
$$
$$
\qquad-\mu\int_{\Omega_\psi} G(q,u(q),v(q)) d\mu(q),
$$
for every $(u,v)\in X$.\par
Clearly, the functional $I_{\lambda,\mu}$ is well--defined in $X$ and, thanks to $(f_1)$--$(f_3)$ as well as $(g_1)$--$(g_3)$, it is of class $C^1(X)$. Moreover, for every $(u,v)\in X$
\begin{align}\label{weakform}
\langle I_{\lambda,\mu}'(u,v),(\varphi,\psi)\rangle&= \langle u,\varphi\rangle-\nu \int_{\Omega_\psi}V(q)u(q)\varphi(q)d\mu(q)\nonumber\\
&\quad + \langle v,\psi\rangle-\nu \int_{\Omega_\psi}V(q)v(q)\psi(q)d\mu(q)\nonumber\\
& \quad- \lambda\int_{\Omega_\psi}K(q)\partial_1F(u(q),v(q))\varphi(q) d\mu(q)\\
& \quad-\lambda\int_{\Omega_\psi}K(q)\partial_2F(u(q),v(q))\varphi(q) d\mu(q)\nonumber\\
& \quad-\mu\int_{\Omega_\psi}\partial_1G(q,u(q),v(q))\psi(q) d\mu(q)\nonumber\\
& \quad-\mu\int_{\Omega_\psi}\partial_2G(q,u(q),v(q))\psi(q) d\mu(q)\nonumber,
\end{align}
for all $(\varphi,\psi)\in  X$.
Hence, the critical points of $I_{\lambda,\mu}$ in~$X$ are exactly the (weak) solutions of~\eqref{p2}.\par

Let
$$\begin{aligned}
Y_T=HW^{1,2}_{0,T}(\Omega_\psi)\times HW^{1,2}_{0,T}(\Omega_\psi)\subset X
\end{aligned}$$
be endowed with the induced norm $\|\cdot\|$, where $T=U(n_1)\times...\times U(n_\ell)\times\{1\}\subseteq \mathbb{U}(n)$, with $n=\sum_{i=1}^{\ell}n_i$, $n_i\geq 1$ and $\ell\geq 1$.

 A pair $(u,v)\in Y_T$ is said to be a (\textit{weak}) \textit{solution of problem~\eqref{p2} only in the $Y_T$ sense} if equality \eqref{weaksol} holds for
every $(\varphi,\psi)\in  Y_T$.\par
Then $(u,v)\in Y_T$ is a solution of \eqref{p2} in the entire space $HW^{1,2}_0(\Omega_\psi)$
if the {\em principle of symmetric criticality} of Palais given in~\cite{pala}
holds.

 To prove this let us recall the well known principle of symmetric criticality of Palais
stated in the general form, proved in~\cite{demo} for reflexive strictly convex Banach spaces. For details and comments we refer to ~\cite[Section~5]{cp}.

More precisely, let $E=(E,\|\cdot\|_E)$ be a reflexive strictly convex Banach space. Suppose that $\mathcal G$ is a subgroup of isometries $g: E\to E$, that is $g$ is linear and $\|{gu}\|_E=\|{u}\|_E$ for all $u\in E$.\par
 Consider the $\mathcal G$--invariant closed subspace of $E$
\begin{equation*}
\Sigma_{\mathcal G}=\{u\in E: gu=u\mbox{ for all }g\in\mathcal G\}.
\end{equation*}
By \cite[Proposition 3.1]{demo} we have the following result.
\begin{theorem}\label{lem3}
Let $E$, $\mathcal G$ and $\Sigma$ be as before and let $I$ be a $C^1$ functional defined on $E$ such that $$I(gu)=I(u),\,\,\,\,\forall\, u\,\in E$$ for every $g\in \mathcal G$.

 Then $u\in\Sigma_{\mathcal G}$ is a critical point of $I$ if and only if $u$ is a critical point of $\mathcal J=I\vert_{\Sigma_{\mathcal G}}$.
\end{theorem}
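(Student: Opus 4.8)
The plan is to prove the two implications of the equivalence separately. The direction ``$u$ critical for $I$ $\Rightarrow$ $u$ critical for $\mathcal J$'' is immediate: since $\Sigma_{\mathcal G}$ is a closed linear subspace of $E$ and $I\in C^1(E)$, the restriction $\mathcal J=I\vert_{\Sigma_{\mathcal G}}$ is of class $C^1$ on $\Sigma_{\mathcal G}$ with $\mathcal J'(u)=I'(u)\vert_{\Sigma_{\mathcal G}}$ for every $u\in\Sigma_{\mathcal G}$, so $I'(u)=0$ forces $\mathcal J'(u)=0$. The substance lies in the converse, which is exactly where the reflexivity and strict convexity of $E$ enter.

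For the converse, fix $u\in\Sigma_{\mathcal G}$ with $\mathcal J'(u)=0$ and set $\xi:=I'(u)\in E^*$. I would first extract two properties of $\xi$. On one hand, $\xi$ annihilates $\Sigma_{\mathcal G}$, since $\xi(h)=\mathcal J'(u)(h)=0$ for every $h\in\Sigma_{\mathcal G}$. On the other hand, $\xi$ is $\mathcal G$--invariant, i.e. $\xi\circ g=\xi$ for every $g\in\mathcal G$: differentiating the hypothesis $I(gw)=I(w)$ with respect to $w$ at $w=u$, and using that each $g$ is linear (so it equals its own Fr\'{e}chet derivative) together with $gu=u$, one obtains $I'(u)(gh)=I'(gu)(gh)=I'(u)(h)$ for all $h\in E$, that is $\xi(gh)=\xi(h)$.

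The key step is then a self--contained claim: if $\xi\in E^*$ is $\mathcal G$--invariant and vanishes on $\Sigma_{\mathcal G}$, then $\xi=0$. Suppose not, and normalise so that $\|\xi\|_{E^*}=1$. Since $E$ is reflexive, its closed unit ball is weakly compact and the weakly continuous functional $\xi$ attains its norm on it, so there is $x_0\in E$ with $\|x_0\|_E=1$ and $\xi(x_0)=1$. For any $g\in\mathcal G$ the point $gx_0$ satisfies $\|gx_0\|_E=\|x_0\|_E=1$ because $g$ is an isometry, and $\xi(gx_0)=\xi(x_0)=1$ by the $\mathcal G$--invariance of $\xi$; hence the midpoint $\tfrac12(x_0+gx_0)$ has norm at most $1$ while $\xi\bigl(\tfrac12(x_0+gx_0)\bigr)=1$, which forces $\bigl\|\tfrac12(x_0+gx_0)\bigr\|_E=1$. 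Strict convexity of $E$ then gives $gx_0=x_0$, and since $g\in\mathcal G$ was arbitrary, $x_0\in\Sigma_{\mathcal G}$. But then $1=\xi(x_0)=0$, a contradiction; hence $\xi=0$, i.e. $I'(u)=0$, and $u$ is a critical point of $I$.

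I expect no genuine analytic obstacle here: the only delicate points are matters of care, namely the bookkeeping in the invariance identity (the chain rule for the linear maps $g$ and the substitution $gu=u$) and pinning down the precise role of the two structural hypotheses — reflexivity, which supplies the norm--attaining element $x_0$, and strict convexity, which makes that element rigid enough to be fixed by the whole group $\mathcal G$. Neither hypothesis can be dropped along this line of argument; alternatively, one may simply invoke \cite[Proposition~3.1]{demo}, whose proof follows the same scheme via the duality mapping of $E$.
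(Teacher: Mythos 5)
Your proof is correct. Note that the paper does not prove Theorem \ref{lem3} in the text at all: it simply invokes \cite[Proposition 3.1]{demo}, and your argument is essentially that reference's proof via the duality mapping, unwound by hand --- reflexivity giving a norm--attaining point $x_0$ for $\xi=I'(u)$, strict convexity forcing $gx_0=x_0$ via the midpoint identity, hence $x_0\in\Sigma_{\mathcal G}$ and $\xi=0$. So you have supplied a complete, self--contained version of the same standard argument, with the easy implication and the $\mathcal G$--invariance of $I'(u)$ (chain rule for the linear isometries $g$ together with $gu=u$) handled correctly.
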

We recall that $\Omega_\psi$ is a nonempty open subset of $\mathbb H^n$, which is
$T$--invariant. Furthermore, we recall that from the invariance point of view, the unitary groups play
the same role in the Heisenberg setting as the orthogonal groups in the Euclidean framework.

Thus we apply the principle of symmetric criticality to the Sobolev space $Y_T$ under the action
$\pi_{\sharp}:T\times X\rightarrow X$ defined in \eqref{action}. Let us again denote
$$
\Phi(u,v)=\frac{1}{2}\left(\|u\|^2+\|v\|^2\right),\quad\forall\, (u,v)\in X.
$$
\indent Clearly,
\begin{equation}\label{invariance}
\begin{split}
\Phi(\pi_{\sharp}(\widehat{\tau}, (u,v)))& = \Phi(\widehat{\tau}\sharp u, \widehat{\tau}\sharp v)\\
&=\frac{1}{2}\big(\|\widehat{\tau}\sharp u\|^2+\|\widehat{\tau}\sharp v\|^2\big)\\
&=\frac{1}{2}\big(\|u\|^2+\|v\|^2\big)\\
&=\Phi(u,v),\quad\mbox{for every}\, (u,v)\in X,\,\,\widehat{\tau}\in T,
\end{split}
\end{equation}
since $T$ acts isometrically on $HW^{1,2}_0(\Omega_\psi)$ as proved in \cite[Lemma 3.2]{bk}. Thus the functional
$\Phi$
is $T$--invariant.\par
Moreover, the functional $\Upsilon_{\lambda,\mu}:X\rightarrow\mathbb{R}$ given by
\begin{equation*}
\begin{split}
\Upsilon_{\lambda,\mu}(u,v)& = \lambda\int_{\Omega_\psi}\!\!K(q)F(u(q),v(q))d\mu(q)\\
& \quad+\mu\int_{\Omega_\psi} G(q,u(q),v(q)) d\mu(q),\quad\mbox{for every}\,\, (u,v)\in X,\,\, \widehat{\tau}\in T,
\end{split}
\end{equation*}

\noindent is $T$--invariant
by assumptions $(h_K)$ and $(g_4)$.

Indeed, let us fix $\widehat{\tau}\in T$ and $(u,v)\in X$. Then
putting $\tau^{-1}*q=p$, we get by $(T_1)$--$(T_3)$
\begin{align*}
\Upsilon_{\lambda,\mu}(\pi_{\sharp}(\widehat{\tau}, (u,v)))&=\Upsilon_{\lambda,\mu}(\widehat{\tau}\sharp u, \widehat{\tau}\sharp v)\\
&=\lambda\int_{\Omega_\psi}\!\!K(q)F(\widehat{\tau}\sharp u(q),\widehat{\tau}\sharp v(q))d\mu(q)
+\mu\int_{\Omega_\psi} G(q,\widehat{\tau}\sharp u(q),\widehat{\tau}\sharp v(q)) d\mu(q)\\
&=\lambda\int_{\Omega_\psi}\!\!K(q)F(u({\widehat{\tau}}^{-1}*q),v({\widehat{\tau}}^{-1}*q))d\mu(q)\\
&\quad +\mu\int_{\Omega_\psi} G(q,u({\widehat{\tau}}^{-1}*q),v({\widehat{\tau}}^{-1}*q)) d\mu(q)\\
&=\lambda\int_{\widehat{\tau}*\Omega_\psi}\!\!K(\widehat{\tau}*p)F(u(p),v(p))d\mu(\widehat{\tau}*p)\\
&\quad +\mu\int_{\widehat{\tau}*\Omega_\psi} G(\widehat{\tau}*p,u(p),v(p)) d\mu(\widehat{\tau}*p)\\
&=\lambda\int_{\Omega_\psi}\!\!K(p)F(u(p),v(p))d\mu(p)+\mu\int_{\Omega_\psi} G(p,u(p),v(p)) d\mu(p)\\
&=\Upsilon_{\lambda,\mu}(u,v),
\end{align*}
since $T*\Omega_\psi=\Omega_\psi$ and $K,G$ are $T$--invariant by assumption. Moreover,
the left $*$  invariance of the measure $\mu$ (keeping in mind that the Jacobian
of the change of variables has determinant 1) implies
$$d\mu(\widehat{\tau}*p)=d\mu(p)\quad\mbox{for all }p\in\Omega_\psi,$$
which is exactly formula (10) from~\cite{BLie}, where 1 is the multiplier of~$\mu$.
See also~\cite[Chapter~4]{BBE}.

Thus,  $I_{\lambda,\mu}$ is $T$--invariant in $X$ with respect to the action
$\pi_{\sharp}:T\times X\rightarrow X$.

Hence, the principle of
symmetric criticality of Palais ensures that $(u,v)\in Y_T$ is a solution of problem \eqref{p2} if and only if $(u,v)$ is a critical point of the functional $\mathcal{J}_{\lambda,\mu}:Y_T\rightarrow \mathbb{R}$, where $\mathcal{J}_{\lambda,\mu}=I_{\lambda,\mu}\vert_{Y_T}$.

We will employ
first an abstract theorem by Ricceri \cite[Theorem 4]{Ricceri} merging together minimax
and critical point theory to derive the existence of two local minima for the energy
$\mathcal{J}_{\lambda,\mu}$. For the convenience of the reader and to make our exposition self--contained, we
state this abstract tool below, is the version rephrased in terms of the weak topology.

\begin{theorem}\label{teorricceri}
Let $E$ be a reflexive Banach space, $D\subseteq \R$ an interval, and $\Psi:E\times D\rightarrow \R$ a function satisfying the following:
\begin{itemize}
\item[$(\Psi_1)$] $\Psi(x,\cdot)$ is concave in $D$ for every $x\in E$$;$
\item[$(\Psi_2)$] $\Psi(\cdot,\lambda)$ is continuous, coercive and sequentially weakly lower semicontinuous in $E$, for every $\lambda\in D$$;$
\item[$(\Psi_3)$] $\displaystyle\sup_{\lambda\in D}\inf_{x\in E}\Psi(x,\lambda)<\displaystyle\inf_{x\in E}\sup_{\lambda\in D}\Psi(x,\lambda)$.
\end{itemize}
Then for every $\zeta>\displaystyle\sup_{\lambda\in D}\inf_{x\in E}\Psi(x,\lambda)$, there exists an a nonempty open set $\Lambda\subseteq D$ with the following property$:$

for every $\lambda \in \Lambda$ and every sequentially weakly lower semicontinuous functional $\Theta:E\rightarrow \R$, there exists $\delta>0$ such that, for every $\mu\in (0,\delta)$, the functional $\mathcal{E}_{\lambda,\mu}:E\rightarrow \R$ given by
$$
\mathcal{E}_{\lambda,\mu}(x)=\Psi(x, \lambda)+\mu \Theta(x)
$$
has at least two local minima lying in the set
$$
E^{\lambda}_\zeta=\{x\in E:\Psi(x,\lambda)<\zeta\}.
$$
\end{theorem}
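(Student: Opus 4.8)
The plan is to deduce the statement from Ricceri's minimax theorem \cite{Ricceri}, which is precisely what this result rephrases for the weak topology; so the first task is to check that the hypotheses translate. I would endow $E$ with its weak topology $\sigma(E,E^*)$. Since $E$ is reflexive, closed balls are weakly compact and bounded sequences have weakly convergent subsequences; hence, by $(\Psi_2)$, each $\Psi(\cdot,\lambda)$ is sequentially weakly lower semicontinuous with bounded sublevels, i.e. inf-compact for the weak topology, and $\Theta$ is weakly lower semicontinuous. By $(\Psi_1)$, $\Psi(x,\cdot)$ is concave, hence continuous on $\mathrm{int}\,D$, and $(\Psi_3)$ is precisely the strict minimax gap $\beta:=\sup_{\lambda\in D}\inf_{x\in E}\Psi(x,\lambda)<\inf_{x\in E}\sup_{\lambda\in D}\Psi(x,\lambda)=:\alpha$. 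With this dictionary the conclusion is Ricceri's; moreover one may assume $\beta<\zeta<\alpha$, since if $\zeta\ge\alpha$ (possible only when $\alpha<\infty$) one replaces $\zeta$ by some $\zeta'\in(\beta,\alpha)$ and uses $E^{\lambda}_{\zeta'}\subseteq E^{\lambda}_{\zeta}$.

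For a more self-contained picture I would follow the two-step mechanism behind Ricceri's theorem. \emph{Step 1 (the gap produces a stable double well).} Using the minimax theorem for the couple $(E_{\mathrm{weak}},D)$, the failure of the minimax equality forces a nonempty open subinterval $\Lambda\subseteq D$ and a level $\rho\in(\beta,\zeta)$ such that, for every $\lambda\in\Lambda$, the coercive weakly lsc functional $\Psi(\cdot,\lambda)$ has its minimum $\le\beta<\rho$ but cannot concentrate it in a single well: one obtains two disjoint closed balls $\overline B^{\lambda}_1,\overline B^{\lambda}_2\subseteq E^{\lambda}_{\zeta}$, centered at points $x^{\lambda}_i$ with $\Psi(x^{\lambda}_i,\lambda)<\rho$ and with the separation property $\inf_{\partial B^{\lambda}_i}\Psi(\cdot,\lambda)>\rho$. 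For a single parameter this is the alternative clause of Ricceri's minimax theorem (a strict gap forces some $\lambda$ for which $\Psi(\cdot,\lambda)$ has a non-unique global minimizer, below level $\zeta$); its persistence on an open interval and the localization inside $E^{\lambda}_{\zeta}$ come from working with the concave continuous auxiliary maps $\lambda\mapsto\inf_{\overline B}\Psi(\cdot,\lambda)$ over balls $\overline B$ and a continuity/covering argument. In particular, for every $\lambda\in\Lambda$, $\Psi(\cdot,\lambda)$ already has two local minima in $E^{\lambda}_{\zeta}$, which settles the case $\Theta\equiv0$.

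\emph{Step 2 (the perturbation is soft).} Fix $\lambda\in\Lambda$ and a sequentially weakly lsc $\Theta:E\to\R$. On the weakly compact ball $\overline B^{\lambda}_i$ the weakly lsc $\Theta$ attains a finite minimum $c_i$, and $\Theta(x^{\lambda}_i)$ is finite; with $m_i:=\Psi(x^{\lambda}_i,\lambda)<\rho$ one may take
\[
\delta:=\min_{i=1,2}\frac{\rho-m_i}{1+\Theta(x^{\lambda}_i)-c_i}>0 .
\]
Then for $\mu\in(0,\delta)$, writing $\mathcal E_{\lambda,\mu}=\Psi(\cdot,\lambda)+\mu\Theta$,
\begin{align*}
\mathcal E_{\lambda,\mu}(x^{\lambda}_i) &= m_i+\mu\Theta(x^{\lambda}_i)\;<\;\rho-\mu+\mu c_i \\
&\le\;\inf_{\partial B^{\lambda}_i}\Psi(\cdot,\lambda)+\mu\inf_{\overline B^{\lambda}_i}\Theta\;\le\;\inf_{\partial B^{\lambda}_i}\mathcal E_{\lambda,\mu}.
\end{align*}
Since $\mathcal E_{\lambda,\mu}$ is weakly lsc on the weakly compact ball $\overline B^{\lambda}_i$, its infimum there is attained, and the displayed inequality forces it to be attained at an interior point, which is then a local minimum of $\mathcal E_{\lambda,\mu}$ on $E$ lying in $B^{\lambda}_i\subseteq E^{\lambda}_{\zeta}$; as $\overline B^{\lambda}_1\cap\overline B^{\lambda}_2=\emptyset$ this yields the two required local minima.

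I expect the hard part to be Step 1: turning the abstract inequality $(\Psi_3)$ into the concrete, $\lambda$-stable double-well geometry confined below level $\zeta$. This is the substance of Ricceri's minimax theorem, and the delicate points are (a) that a strict gap yields a parameter with a genuinely non-unique (indeed separated) global minimizer, not merely a smaller value of $\inf_x\Psi(x,\lambda)$ — which rests on the concavity of $\lambda\mapsto\inf_x\Psi(x,\lambda)$ and a connectedness argument for the minimizer map — and (b) the uniformity in $\lambda$ on an open interval together with confinement inside $E^{\lambda}_{\zeta}$. By contrast, Step 2 is routine: it uses only reflexivity, weak lower semicontinuity of sums, and the elementary estimate above, and there it matters that $\Theta$ is everywhere finite and weakly lsc, hence bounded below on weakly compact sets.
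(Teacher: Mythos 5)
You should first note what the paper actually does with this statement: it is not proved there at all, but quoted as Ricceri's Theorem 4 of \cite{Ricceri}, ``rephrased in terms of the weak topology'', and then used as a black box. Your opening paragraph (the dictionary between $(\Psi_1)$--$(\Psi_3)$ and Ricceri's hypotheses via reflexivity, Eberlein--\v{S}mulian, and the reduction to $\beta<\zeta<\alpha$) coincides with the paper's treatment, and at that level there is nothing to object to. Your Step 2 is also correct and standard: on a closed ball of a reflexive space a sequentially weakly lsc functional attains its infimum, and your choice of $\delta$ does force the minimizer of $\mathcal E_{\lambda,\mu}$ over $\overline B^{\lambda}_i$ into the open ball, hence it is a norm-local minimum.

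The genuine gap is Step 1, which you correctly identify as the hard part but do not prove. The minimax alternative you invoke yields only the existence of some single $\lambda_0$ at which $\Psi(\cdot,\lambda_0)$ has a non-unique global minimizer; this is strictly weaker than the double-well geometry your Step 2 needs. Non-uniqueness of the minimizer does not give disjoint balls with strict sphere barriers: if the global minimizers of $\Psi(\cdot,\lambda_0)$ form a connected continuum (or if the infimum over every sphere around a minimizer equals the minimum value without being attained, which coercivity plus weak lsc does not exclude), then $\inf_{\partial B}\Psi(\cdot,\lambda_0)$ is not strictly above the well value and your perturbation estimate cannot start; what two separated wells really require is a disconnection of suitable sublevel sets, which non-uniqueness alone does not provide. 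Moreover, the passage from one parameter to an open interval $\Lambda$, and the confinement $\overline B^{\lambda}_i\subseteq E^{\lambda}_\zeta$ for all $\lambda\in\Lambda$, are asserted via an unspecified ``continuity/covering argument'': note that $\lambda\mapsto\inf_{\overline B}\Psi(\cdot,\lambda)$ is indeed concave, hence continuous on the interior of $D$, but $\lambda\mapsto\sup_{\overline B}\Psi(\cdot,\lambda)$, which governs the inclusion in $E^{\lambda}_\zeta$, is a supremum of concave functions and need not vary continuously. This missing step is precisely the content of Ricceri's theorem, whose own route (as the title of \cite{Ricceri} indicates) is a contradiction argument through minimax-equality theorems for parametrized functionals having at most one local minimum lying in a prescribed set, not an explicit construction of $\lambda$-stable double wells. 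So either you cite Ricceri outright, as the paper does, or Step 1 must actually be proved; as written, it is not.
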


\subsection{Growth condition and regularity properties}

Here, we use the structural assumptions on $F$ to get some bounds from above for the nonlinear term and
its derivatives. This part is quite standard and does not take into account the subelliptic features of the problem: the reader
familiar with these nonlinear analysis estimates may go directly to Lemma \ref{regularity}.

\begin{lemma}\label{crescitaF}
Assume that conditions $(f_1)$--$(f_3)$ hold. Then for every $\varepsilon>0$ there exists $c_\varepsilon>0$ such that
\begin{itemize}
\item[$(i)$] $
\max\{|\partial_1 F(\eta,\zeta)|, |\partial_2 F(\eta,\zeta)|\}\leq \varepsilon \left(|\eta|+ |\zeta|\right) + c_\varepsilon \left(|\eta|^{\alpha-1} + |\zeta|^{(\alpha-1)} \right),$
\item[$(ii)$] for every $(\eta,\zeta)\in\R^2$ \begin{equation*}
\begin{split}
|F(\eta,\zeta)|&\leq \varepsilon \left(|\eta|^{2} + 2|\eta||\zeta| + |\zeta|^2\right)\\
&\;\;\;  + c_\varepsilon \left(|\eta|^{\alpha} + |\zeta|^\alpha+|\zeta|^{(\alpha-1)}|\eta|  + |\zeta||\eta|^{(\alpha-1)} \right).
\end{split}
\end{equation*}
\end{itemize}
\end{lemma}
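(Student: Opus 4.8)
The plan is to prove the two inequalities of Lemma~\ref{crescitaF} by a standard $\varepsilon$--splitting argument based on $(f_2)$ and $(f_3)$, plus integration for part $(ii)$. First I would fix $\varepsilon>0$ and use $(f_2)$: the limits $\partial_1F(\eta,\zeta)/|\eta|\to 0$ and $\partial_2F(\eta,\zeta)/|\zeta|\to 0$ as $(\eta,\zeta)\to(0,0)$ guarantee a radius $\delta=\delta(\varepsilon)>0$ such that $|\partial_1F(\eta,\zeta)|\le \varepsilon|\eta|$ and $|\partial_2F(\eta,\zeta)|\le\varepsilon|\zeta|$ whenever $|\eta|+|\zeta|\le\delta$ (after possibly shrinking $\delta$; note $(f_2)$ as stated with the joint limit requires a small remark, since near the axes $|\eta|$ or $|\zeta|$ may vanish — but then the bound $\varepsilon(|\eta|+|\zeta|)$ is what one actually proves, absorbing both one--sided estimates). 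On the complementary region $|\eta|+|\zeta|>\delta$, I would invoke $(f_3)$: there $|\partial_1F(\eta,\zeta)|\le\epsilon(|\eta|+|\zeta|+|\eta|^{\alpha-1})$, and since $\alpha-1>1$, on $|\eta|+|\zeta|>\delta$ the linear part $|\eta|+|\zeta|$ is controlled by a constant multiple of $(|\eta|+|\zeta|)^{\alpha-1}$, hence by $C_\delta(|\eta|^{\alpha-1}+|\zeta|^{\alpha-1})$ up to a combinatorial constant. Combining the two regions and taking $c_\varepsilon$ to be the maximum of the constants produced yields $(i)$; the same reasoning applied verbatim to $\partial_2F$ gives the symmetric bound, and taking the maximum finishes $(i)$.

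For part $(ii)$, the idea is to integrate $(i)$ along the segment from $(0,0)$ to $(\eta,\zeta)$. Using $F(0,0)=0$ from $(f_1)$ and the $C^1$ regularity, write
\begin{equation*}
F(\eta,\zeta)=\int_0^1\frac{d}{ds}F(s\eta,s\zeta)\,ds=\int_0^1\bigl(\partial_1F(s\eta,s\zeta)\eta+\partial_2F(s\eta,s\zeta)\zeta\bigr)\,ds.
\end{equation*}
Then I would bound $|\partial_1F(s\eta,s\zeta)|\le\varepsilon(s|\eta|+s|\zeta|)+c_\varepsilon(s^{\alpha-1}|\eta|^{\alpha-1}+s^{\alpha-1}|\zeta|^{\alpha-1})$ by $(i)$, multiply by $|\eta|$, do the same for the $\partial_2F$ term with factor $|\zeta|$, and integrate the explicit powers of $s$ over $[0,1]$ (which only contributes harmless constants $1/2$ and $1/\alpha$ that can be absorbed into $\varepsilon$ and $c_\varepsilon$ after renaming). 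Expanding the products $|\eta|(|\eta|+|\zeta|)$, $|\zeta|(|\eta|+|\zeta|)$ gives the quadratic block $|\eta|^2+2|\eta||\zeta|+|\zeta|^2$, and the products $|\eta|(|\eta|^{\alpha-1}+|\zeta|^{\alpha-1})$, $|\zeta|(|\eta|^{\alpha-1}+|\zeta|^{\alpha-1})$ give exactly $|\eta|^\alpha+|\zeta|^\alpha+|\zeta|^{\alpha-1}|\eta|+|\zeta||\eta|^{\alpha-1}$, which is the stated right--hand side of $(ii)$ after relabeling the constant.

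This lemma is entirely elementary and there is no genuine obstacle; the only point requiring a little care is the precise reading of $(f_2)$ near the coordinate axes (where one of $|\eta|$, $|\zeta|$ degenerates), which is why the conclusion is phrased with the symmetric combination $|\eta|+|\zeta|$ rather than separate one--sided bounds — this is exactly what makes the argument go through cleanly on the small--norm region. Everything else is bookkeeping of constants, which I would simply absorb by renaming $c_\varepsilon$ at the end.
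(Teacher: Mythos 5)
Your argument is correct and matches the paper's proof in all essentials: part $(i)$ is obtained by the same $\varepsilon$--splitting via $(f_2)$ near the origin and $(f_3)$ on $\{|\eta|+|\zeta|\geq\delta_\varepsilon\}$, absorbing the linear part into the power $(\alpha-1)$ terms exactly as in the paper. For part $(ii)$ you integrate $\frac{d}{ds}F(s\eta,s\zeta)$ over $[0,1]$ instead of invoking the two--variable Mean Value Theorem at a single intermediate point, which is an immaterial variation of the same idea, so the proof goes through as you describe.
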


\begin{proof}
Let $\varepsilon>0$. First, we will prove that there exists $c_{\varepsilon,1}>0$ such that
\begin{equation}\label{ine1}
|\partial_1 F(\eta,\zeta)|\leq \varepsilon \left(|\eta|+ |\zeta|\right) + c_{\varepsilon,1} \left(|\eta|^{\alpha-1} + |\zeta|^{(\alpha-1)} \right),
\end{equation}
for every $(\eta,\zeta)\in\R^2$.
Indeed, by $(f_2)$, it follows in particular, that
$$
\lim_{\eta,\zeta\rightarrow 0}\frac{\partial_1 F(\eta,\zeta)}{|\eta|+|\zeta|}=0.
$$
Thus there exists $\delta_\varepsilon>0$ such that if $|\eta|+|\zeta|<\delta_\varepsilon$, then $|\partial_1 F(\eta,\zeta)|\leq \varepsilon (|\eta|+|\zeta|)$. On the other hand, if $|\eta|+|\zeta|\geq\delta_\varepsilon$, condition $(f_3)$ yields
\begin{equation}\label{diseq1}
\begin{split}
|\partial_1 F(\eta,\zeta)|&\leq \epsilon \left(|\eta|+|\zeta|+|\eta|^{\alpha-1}\right)\\
&= \epsilon \left((|\eta|+|\zeta|)^{\alpha-1}(|\eta|+|\zeta|)^{2-\alpha}+|\eta|^{\alpha-1}\right)\\
& \leq \epsilon \left((|\eta|+|\zeta|)^{\alpha-1}\delta_\varepsilon^{2-\alpha}+|\eta|^{\alpha-1}\right).
\end{split}
\end{equation}
Moreover, since $\alpha\in (2,2^*)$ and bearing in mind that
$$(|\eta|+|\zeta|)^{\alpha-1}\leq 2^{\alpha-2}(|\eta|^{\alpha-1}+|\zeta|^{\alpha-1}),\quad \forall\, (\eta,\zeta)\in \R^2$$ inequality \eqref{diseq1} gives
\begin{equation}\label{diseq2}
|\partial_1 F(\eta,\zeta)|\leq c_{\varepsilon,1} \left(|\eta|^{\alpha-1}+|\zeta|^{\alpha-1}\right),
\end{equation}
for every $(\eta,\zeta)\in\R^2$ with $|\eta|+|\zeta|\geq\delta_\varepsilon$. Hence, relation \eqref{ine1} immediately follows by \eqref{diseq1} and \eqref{diseq2}. Arguing as above, it follows that
there exists $c_{\varepsilon,2}>0$ such that
\begin{equation}\label{ine2}
|\partial_2 F(\eta,\zeta)|\leq \varepsilon \left(|\eta|+ |\zeta|\right) + c_{\varepsilon,2} \left(|\eta|^{\alpha-1} + |\zeta|^{(\alpha-1)} \right),
\end{equation}
for every $(\eta,\zeta)\in\R^2$.
In conclusion, relation $(i)$ holds by \eqref{ine1} and \eqref{ine2} if taking $c_\varepsilon=\max\{c_{\varepsilon,1}, c_{\varepsilon,2}\}$.

In order to prove part $(ii)$, we make use of the Mean Value Theorem in two variables. More precisely, by $(f_1)$ it follows that
\begin{equation}\label{Lagrange}
\begin{split}
|F(\eta,\zeta)|&= |F(\eta,\zeta)-F(0,0)|\\
&= |\nabla F(c\eta,c\zeta)\cdot (\eta,\zeta)|\\
& \leq |\partial_1 F(c\eta,c\zeta)||\eta|+|\partial_2 F(c\eta,c\zeta)||\zeta|,
\end{split}
\end{equation}
for every $(\eta,\zeta)\in \R^2$ and some $c\in (0,1)$. Now, by using part $(ii)$ it follows that
\begin{equation}\label{Lagrange2}
\max\{|\partial_1 F(c\eta,c\zeta)|, |\partial_2 F(c\eta,c\zeta)|\}\leq \varepsilon \left(|\eta|+ |\zeta|\right) + c_\varepsilon \left(|\eta|^{\alpha-1} + |\zeta|^{(\alpha-1)} \right),
\end{equation}
for every  $(\eta,\zeta)\in \R^2$. A direct computation shows that $(ii)$ follows by \eqref{Lagrange} and \eqref{Lagrange2}.
\end{proof}

The next result is a consequence of Lemma \ref{crescitaF} and can be viewed as a counterpart of some contributions obtained in several different contexts (see, among others, the paper \cite{K}) to the case of subelliptic gradient--type systems defined on the domain $\Omega_\psi$. We emphasize that a key ingredient of the proof is given by Lemmas \ref{lem1} and \ref{lem2}. They express peculiar and intrinsic aspects of the problem under consideration.

\begin{lemma}\label{regularity}
Let $T=U(n_1)\times...\times U(n_\ell)\times\{1\}$, where $n=\sum_{i=1}^{\ell}n_i$, with $n_i\geq 1$ and $\ell\geq 1$. Furthermore, let $\Omega_\psi\subset \mathbb H^n$ as in \eqref{omega}, $K:\Omega_\psi\rightarrow \R$ be a potential such that $(h_K)$ holds, and $F:\R^2\rightarrow \R$ be a continuous function satisfying $(f_1)$--$(f_3)$.

Then the functional $\mathfrak F:Y_T\rightarrow \R$ given by
$$
\mathfrak F(u,v)=\int_{\Omega_\psi}K(q)F(u(q),v(q))d\mu(q),\quad\forall\, (u,v)\in Y_T
$$
is sequentially weakly continuous on $Y_T$.
\end{lemma}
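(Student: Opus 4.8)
<br>

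The plan is to show that if $(u_k,v_k)\rightharpoonup (u,v)$ weakly in $Y_T$, then $\mathfrak F(u_k,v_k)\to\mathfrak F(u,v)$. The key leverage is the compactness of the embedding $HW^{1,2}_{0,T}(\Omega_\psi)\hookrightarrow L^q(\Omega_\psi)$ for $q\in(2,2^*)$ from Lemma \ref{lem2}: since weak convergence in $Y_T$ implies $u_k\to u$ and $v_k\to v$ strongly in $L^q(\Omega_\psi)$ for every such $q$, up to a subsequence we also have $u_k\to u$, $v_k\to v$ pointwise a.e.\ and domination by a fixed $L^q$ function (by the usual converse-to-dominated-convergence argument). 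The estimate $(ii)$ of Lemma \ref{crescitaF}, combined with $K\in L^\infty(\Omega_\psi)$ from $(h_K)$, bounds $|K(q)F(u_k,v_k)|$ by a sum of terms of the form $|u_k|^a|v_k|^b$ with $a+b\in\{2,\alpha\}$ (so each total degree lies in $[2,2^*)$), each of which is dominated via Hölder's inequality by an integrable majorant built from the $L^q$-dominating functions. Then the Vitali convergence theorem (or generalized dominated convergence) yields convergence of the integrals along the subsequence, and the usual subsequence-of-subsequence argument upgrades this to convergence of the full sequence.

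More precisely, first I would fix a weakly convergent sequence $(u_k,v_k)\rightharpoonup(u,v)$ in $Y_T$; by the Banach--Steinhaus theorem it is norm-bounded, say $\|(u_k,v_k)\|\le M$. By Lemma \ref{lem2} the embedding into $L^q(\Omega_\psi)$ is compact for each $q\in(2,2^*)$, so $u_k\to u$ and $v_k\to v$ strongly in such $L^q$ spaces; choosing $q=\alpha$ and $q=2^*-\varepsilon$ suffices for the terms appearing below. Passing to a subsequence (not relabeled), I get $u_k\to u$, $v_k\to v$ a.e.\ in $\Omega_\psi$, hence $K(q)F(u_k(q),v_k(q))\to K(q)F(u(q),v(q))$ a.e.\ by continuity of $F$ and $G$-type arguments — here just continuity of $F$ and measurability of $K$.

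For the majorant, apply Lemma \ref{crescitaF}$(ii)$ with a fixed $\varepsilon$ (say $\varepsilon=1$): $|F(u_k,v_k)|\le |u_k|^2+2|u_k||v_k|+|v_k|^2+c_1(|u_k|^\alpha+|v_k|^\alpha+|v_k|^{\alpha-1}|u_k|+|u_k|^{\alpha-1}|v_k|)$. Multiplying by $\|K\|_\infty$ and using that, after passing to a further subsequence, each of $|u_k|,|v_k|$ is dominated a.e.\ by a fixed function in $L^\alpha(\Omega_\psi)\cap L^2(\Omega_\psi)$ (standard consequence of $L^q$-convergence), each product term is dominated by an $L^1(\Omega_\psi)$ function through Hölder's inequality with exponents matching the total degree. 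The dominated convergence theorem then gives $\mathfrak F(u_k,v_k)\to\mathfrak F(u,v)$ along the subsequence; since the limit $\mathfrak F(u,v)$ does not depend on the subsequence, the Urysohn subsequence principle forces the whole sequence to converge. The main obstacle, and the only genuinely delicate point, is producing the integrable majorant uniformly in $k$: the cross terms $|v_k|^{\alpha-1}|u_k|$ mix two different functions, so one must either invoke the strong $L^\alpha$-convergence to extract a common dominating function along a subsequence, or replace dominated convergence by Vitali's theorem using equi-integrability of the family $\{K\,F(u_k,v_k)\}_k$, which follows from boundedness in a higher $L^r$-norm (e.g.\ controlling $\|F(u_k,v_k)\|_{L^{2^*/\alpha}}$ via Lemma \ref{lem1}). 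Either route is routine once the embedding compactness is in hand.
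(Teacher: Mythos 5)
Your overall strategy (compact embedding $\Rightarrow$ strong $L^q$ convergence $\Rightarrow$ convergence of the integral) is reasonable, but the specific step you rely on fails. You claim that, after passing to a subsequence, $|u_k|,|v_k|$ are dominated a.e.\ by a fixed function in $L^{\alpha}(\Omega_\psi)\cap L^{2}(\Omega_\psi)$ ``as a standard consequence of $L^q$-convergence''. The converse-to-dominated-convergence trick produces a majorant only in those $L^q$ spaces where you have \emph{strong} convergence, and Lemma \ref{lem2} gives compactness only for $q\in(2,2^*)$: the embedding into $L^2(\Omega_\psi)$ is merely continuous, so $u_k\to u$ in $L^2$ is not available and no $L^2$ majorant can be extracted. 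This matters because $\Omega_\psi$ in \eqref{omega} is in general of infinite Haar measure (it is ``large at infinity''), and once you replace $K$ by $\|K\|_\infty$ the quadratic terms $|u_k|^2$, $|u_k||v_k|$, $|v_k|^2$ coming from Lemma \ref{crescitaF}$(ii)$ are only dominated by a function in $L^{\alpha/2}(\Omega_\psi)$, which need not be integrable. The Vitali fallback has the same defect: on an infinite-measure domain, boundedness in a higher $L^r$-norm gives equi-integrability over sets of small measure but does not prevent mass from escaping to infinity, so Vitali's theorem does not apply as stated. In short, the only hypothesis on $K$ you use is $K\in L^\infty$, and with that alone the majorization/tightness step breaks down; the repair within your scheme is to keep the weight $K$ and use the full assumption $(h_K)$, namely $K\in L^1\cap L^\infty$, hence $K\in L^{\alpha/(\alpha-2)}(\Omega_\psi)$, so that $K\,h\tilde h\in L^1(\Omega_\psi)$ for the degree-two products by H\"older, and the tail at infinity is controlled by the integrability of $K$.

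For comparison, the paper's proof sidesteps domination and a.e.\ convergence entirely: it argues by contradiction, assumes $|\mathfrak F(u_j,v_j)-\mathfrak F(\widetilde u,\widetilde v)|>\varepsilon_0$, applies the mean value theorem to $\mathfrak F$ along the segment joining $(u_j,v_j)$ and $(\widetilde u,\widetilde v)$, and estimates $\mathfrak F'(w_j,y_j)(u_j-\widetilde u,v_j-\widetilde v)$ using Lemma \ref{crescitaF}$(i)$ and H\"older. The crucial structural point — which your argument misses by fixing $\varepsilon=1$ — is that the non-compact $L^2$ contribution carries the \emph{free} small parameter $\varepsilon$ and only needs boundedness of the sequences in $L^2$, while the fixed-constant $c_\varepsilon$ multiplies the $L^\alpha$ factors $\|u_j-\widetilde u\|_\alpha$, $\|v_j-\widetilde v\|_\alpha$, which do tend to zero by the compact embedding of Lemma \ref{lem2}; letting $j\to\infty$ and then $\varepsilon\to 0$ yields the contradiction. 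Either adopt that $\varepsilon$-splitting (keeping $\varepsilon$ free), or incorporate $K\in L^1$ as above; as written, your proof has a genuine gap at the quadratic terms.
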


\begin{proof}
In order to prove that $\mathfrak F$ is a sequentially weakly continuous functional, arguing by contradiction, we assume that there exists a sequence
$\{(u_j,v_j)\}_{j\in \mathbb N}\subset Y_T$ which weakly converges to an element $(\widetilde{u},\widetilde{v})\in Y_T$, and such that
\begin{equation}\label{wslsc}
|\mathfrak F(u_j,v_j)-\mathfrak F(\widetilde{u},\widetilde{v})|>\varepsilon_0,
\end{equation}
for every $j\in \mathbb{N}$ and some $\varepsilon_0>0$. Now, fixing $(u,v)\in Y_T$, one has
\begin{equation}\label{derivata}
\begin{split}
\mathfrak{F}'(u,v)(\varphi,\psi) &=\int_{\Omega_\psi}K(q)\partial_1F(u(q),v(q))\varphi(q)d\mu(q)\\
&\quad +\int_{\Omega_\psi}K(q)\partial_2F(u(q),v(q))\phi(q)d\mu(q),\quad\forall\, (\varphi,\psi)\in Y_T.
\end{split}
\end{equation}

Invoking \eqref{wslsc}, the Mean Value Theorem ensures that
\begin{equation}\label{derivata}
0<\varepsilon_0\leq |\mathfrak{F}'(w_j,y_j)(u_j-\widetilde{u},v_j-\widetilde{v})|,
\end{equation}
where
$$
w_j=u_j+\theta_j(\widetilde{u}-u_j),
$$
and
$$
y_j=v_j+\theta_j(\widetilde{v}-v_j),
$$
 for some $\theta_j\in (0,1)$, for every $j\in \mathbb{N}$.\par
 By using Lemma \ref{crescitaF} (part $(i)$), the H\"{o}lder inequality yields
\begin{equation}\label{BoX}
\begin{split}
  |\mathfrak{F}'(w_j,y_j)(u_j-\widetilde{u},v_j-\widetilde{v})| & \leq\int_{\Omega_\psi}K(q)|\partial_1F(w_j(q),y_j(q))||u_j(q)-\widetilde{u}(q)|d\mu(q)\\
  &\quad +\int_{\Omega_\psi}K(q)|\partial_2F(w_j(q),y_j(q))||v_j(q)-\widetilde{v}(q)|d\mu(q)\\
  &\leq \varepsilon \Bigg(\int_{\Omega_\psi}K(q)(|w_j(q)|+|y_j(q)|)|u_j(q)-\widetilde{u}(q)|d\mu(q)\\
   &\quad +\int_{\Omega_\psi}K(q)(|w_j(q)|+|y_j(q)|)|v_j(q)-\widetilde{v}(q)|d\mu(q)\Bigg)\\
   &\quad +c_\varepsilon \Bigg(\int_{\Omega_\psi}K(q)(|w_j(q)|^{\alpha-1}+|y_j(q)|^{\alpha-1})|u_j(q)-\widetilde{u}(q)|d\mu(q)\\
   &\quad +\int_{\Omega_\psi}K(q)(|w_j(q)|^{\alpha-1}+|y_j(q)|^{\alpha-1})|v_j(q)-\widetilde{v}(q)|d\mu(q)\Bigg)\\
   &\leq \varepsilon\|K\|_\infty\Bigg((\|w_j\|_2+\|y_j\|_2)(\|u_j-\widetilde{u}\|_2+\|v_j-\widetilde{v}\|_2)\Bigg)\\
   &\quad +c_\varepsilon\|K\|_\infty\Bigg((\|w_j\|_\alpha^{\alpha-1}+\|y_j\|_\alpha^{\alpha-1})(\|u_j-\widetilde{u}\|_\alpha+\|v_j-\widetilde{v}\|_\alpha)\Bigg).
\end{split}
\end{equation}

Now, it is easy to note that the sequences $\{w_j\}_{j\in \N}$ and $\{y_j\}_{j\in \N}$ are bounded in $HW^{1,2}_{0,T}(\Omega_\psi)$. Moreover, due to the compactness Lemma \ref{lem2}, $u_j\rightarrow \widetilde{u}$ and $v_j\rightarrow \widetilde{v}$ in $L^{\alpha}(\Omega_\psi)$. Consequently, the
last expression in \eqref{BoX} tends to zero and this fact
contradicts (\ref{wslsc}).

 In conclusion, the functional $\mathfrak{F}$ is sequentially
weakly continuous and this completes the proof.
\end{proof}

\section{Main multiplicity result}\label{MT}

With the previous notations, the main result of the present paper reads as follows.

\begin{theorem} \label{th1}
Let $T=U(n_1)\times...\times U(n_\ell)\times\{1\}$, where $n=\sum_{i=1}^{\ell}n_i$, with $n_i\geq 1$ and $\ell\geq 1$.
Let $\Omega_\psi\subset \mathbb H^n$ be as in \eqref{omega} with $O=(0,0)\in \Omega_\psi$, $\nu\in [0, C_V^{-1}n^2)$ fixed, and let $V,K:\Omega_\psi\rightarrow \R$ be potentials satisfying $(h_V)$ and $(h_K)$.
Furthermore, let $F:\R^2\rightarrow \R$ be a continuous function satisfying $(f_1)$--$(f_4)$.

Then there exist a number $\sigma>0$ and a nonempty open set $\Lambda\subset (0,\infty)$ such that, for every $\lambda\in \Lambda$ and every continuous function $G:\Omega_\psi\times\R^2\rightarrow \R$ satisfying $(g_1)$--$(g_4)$, there exists $\mu_0>0$ such that, for each $\mu\in (0,\mu_0)$, the gradient--type system \eqref{p2} has at least two weak solutions $(u_{\lambda,\mu}^{(j)}, v_{\lambda,\mu}^{(j)})\in Y_T$, with $j\in \{1,2\}$, lying in the ball
$$
\{(u,v)\in Y_T:\|(u,v)\|\leq \sigma\},
$$
where
\begin{equation*}
\begin{split}
\|(u,v)\|&=\left(\|u\|_{HW^{1,2}_0(\Omega_\psi)}^2-\nu\int_{\Omega_\psi}V(q)|u(q)|^2d\mu(q)\right)^{1/2}\\
&\qquad\qquad +
\left(\|v\|_{HW^{1,2}_0(\Omega_\psi)}^2-\nu\int_{\Omega_\psi}V(q)|v(q)|^2d\mu(q)\right)^{1/2}.
\end{split}
\end{equation*}
\end{theorem}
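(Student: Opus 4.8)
The plan is to verify the hypotheses of Ricceri's abstract theorem (Theorem~\ref{teorricceri}) for a suitable choice of Banach space, interval and two-variable function, and then transport the two-local-minima conclusion back to the space $Y_T$ via the principle of symmetric criticality (Theorem~\ref{lem3}). Concretely, take $E=Y_T$, which is a reflexive (indeed Hilbert) Banach space, take $D=(0,\infty)$ (or a bounded subinterval thereof, to be fixed later), and define
\[
\Psi(u,v,\lambda)=\Phi(u,v)-\lambda\,\mathfrak F(u,v)=\tfrac12\bigl(\|u\|^2+\|v\|^2\bigr)-\lambda\int_{\Omega_\psi}K(q)F(u(q),v(q))\,d\mu(q),
\]
and $\Theta(u,v)=-\int_{\Omega_\psi}G(q,u(q),v(q))\,d\mu(q)$. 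Then $\mathcal E_{\lambda,\mu}=\Psi(\cdot,\lambda)+\mu\Theta=\mathcal J_{\lambda,\mu}$ is exactly the restricted energy functional whose critical points, by Theorem~\ref{lem3}, are weak solutions of \eqref{p2}.

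First I would check the structural conditions $(\Psi_1)$--$(\Psi_2)$. Concavity (actually affinity) in $\lambda$ is immediate since $\Psi$ is affine in $\lambda$. For $(\Psi_2)$: continuity of $\Psi(\cdot,\lambda)$ follows from the continuity of $\Phi$ and of $\mathfrak F$ (the latter from Lemma~\ref{crescitaF} together with the continuous embeddings of Lemma~\ref{lem1}); sequential weak lower semicontinuity follows because $\Phi$ is convex and strongly continuous, hence sequentially weakly l.s.c., while $\mathfrak F$ is sequentially weakly \emph{continuous} on $Y_T$ by Lemma~\ref{regularity}, so $-\lambda\mathfrak F$ is sequentially weakly l.s.c. for $\lambda>0$. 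Coercivity is where condition $(f_4)$ enters: using $(f_4)$ and $K\in L^1\cap L^\infty$ one estimates
\[
\lambda\int_{\Omega_\psi}K(q)F(u,v)\,d\mu(q)\le \lambda\|K\|_\infty\bigl(\kappa_1\|u\|_{p_F}^{p_F}+\kappa_2\|v\|_{q_F}^{q_F}\bigr)+\lambda\kappa_3\|K\|_1,
\]
and since $p_F,q_F\in(0,2)$ and the relevant Lebesgue norms are controlled by $\|(u,v)\|$ (via Lemma~\ref{lem1}, interpolating if $p_F$ or $q_F$ lies below $2$, using $K\in L^1$ to absorb the low exponents), the quadratic term $\tfrac12(\|u\|^2+\|v\|^2)$ dominates, giving $\Psi(\cdot,\lambda)\to+\infty$. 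So $(\Psi_2)$ holds for every $\lambda>0$.

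The main obstacle is verifying the strict minimax inequality $(\Psi_3)$, i.e. producing an interval $D$ on which
\[
\sup_{\lambda\in D}\inf_{(u,v)\in Y_T}\Psi(u,v,\lambda)<\inf_{(u,v)\in Y_T}\sup_{\lambda\in D}\Psi(u,v,\lambda).
\]
Here the standard device is: the left side is controlled by noting $\Psi(0,0,\lambda)=0$ so $\inf\Psi(\cdot,\lambda)\le 0$, and in fact for small $\lambda$ one shows $\inf\Psi(\cdot,\lambda)$ is close to $0$ (or, choosing $D=(0,\bar\lambda)$, that the sup over $D$ of the infima is small); the right side is bounded below away from zero by exhibiting a single well-chosen test pair. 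Using $(f_1)$ (pick $(\eta_0,\zeta_0)$ with $F(\eta_0,\zeta_0)>0$) together with $(h_K)$ ($K$ bounded below on an open set $\Omega_0$), one builds a cylindrically symmetric test function $(u_0,v_0)\in Y_T$, supported near $\Omega_0$, on which $\int K F(u_0,v_0)\,d\mu>0$; scaling this test pair and using $(f_2)$ to control $F$ near the origin shows $\sup_{\lambda\in D}\Psi((u,v),\lambda)$ cannot be made uniformly small as $(u,v)$ ranges over $Y_T$, forcing a strictly positive lower bound for the right-hand infimum once $D$ reaches far enough. Balancing these two estimates pins down both $\bar\lambda$ and the threshold $\zeta>0$ in Ricceri's theorem. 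Once $(\Psi_1)$--$(\Psi_3)$ hold, Theorem~\ref{teorricceri} yields a nonempty open $\Lambda\subseteq D$ such that for every $\lambda\in\Lambda$ and every sequentially weakly l.s.c.\ $\Theta$ there is $\mu_0>0$ with $\mathcal J_{\lambda,\mu}=\Psi(\cdot,\lambda)+\mu\Theta$ having two local minima in $E^\lambda_\zeta=\{\Psi(\cdot,\lambda)<\zeta\}$ for $\mu\in(0,\mu_0)$. It remains to (a) check $\Theta$ is sequentially weakly l.s.c.\ (again via Lemma~\ref{crescitaF}-type bounds from $(g_1)$--$(g_3)$ and the compact embedding Lemma~\ref{lem2}, which makes $(u,v)\mapsto\int G(q,u,v)\,d\mu$ sequentially weakly continuous), (b) invoke Theorem~\ref{lem3} to upgrade these critical points of $\mathcal J_{\lambda,\mu}=I_{\lambda,\mu}|_{Y_T}$ to genuine weak solutions of \eqref{p2} (using the $T$-invariance of $I_{\lambda,\mu}$ established in Subsection~\ref{sottosezione}), and (c) extract the uniform bound: since the two solutions lie in $E^\lambda_\zeta$ and $\Psi(\cdot,\lambda)$ is coercive with constants depending only on $\zeta$ and the fixed data (not on $\mu$ for $\mu$ small, after a harmless shrinking of $\mu_0$), one gets a radius $\sigma>0$, independent of $\lambda\in\Lambda$ and $\mu\in(0,\mu_0)$, with $\|(u^{(j)}_{\lambda,\mu},v^{(j)}_{\lambda,\mu})\|\le\sigma$. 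This completes the argument.
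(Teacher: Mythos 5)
Your overall architecture (Ricceri's Theorem \ref{teorricceri} on $E=Y_T$ plus the Palais principle, coercivity from $(f_4)$, weak continuity of $\mathfrak F$ and $\Theta$ from Lemmas \ref{crescitaF}, \ref{lem1}, \ref{lem2}) matches the paper, but there is a genuine gap at the heart of the argument: the verification of $(\Psi_3)$ fails for the function you chose. With $\Psi((u,v),\lambda)=\Phi(u,v)-\lambda\mathfrak F(u,v)$ and no further modification, the pair $(0,0)$ satisfies $\Phi(0,0)=\mathfrak F(0,0)=0$, so $\sup_{\lambda\in D}\Psi((0,0),\lambda)=0$ and hence $\inf_{(u,v)}\sup_{\lambda}\Psi\leq 0$; on the other hand $\inf_{(u,v)}\Psi(\cdot,\lambda)\leq 0$ for every $\lambda$, and (using the $(f_4)$ bound, which is sublinear in $\Phi$) one checks $\inf_{(u,v)}\Psi(\cdot,\lambda)\to 0$ as $\lambda\to 0^{+}$, so both sides of the minimax inequality coincide and no choice of $D\subseteq(0,\infty)$ rescues strictness (restricting $D$ away from $0$ only shifts where the two sides meet, since $\lambda\mapsto\inf_{(u,v)}\Psi(\cdot,\lambda)$ is concave and nonincreasing). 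Moreover, your stated device for the right-hand side is logically backwards: ``exhibiting a single well-chosen test pair'' can only bound an infimum from \emph{above}; it cannot force $\inf_{(u,v)}\sup_{\lambda}\Psi$ to be strictly positive, and indeed it is not positive for your $\Psi$.

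The paper's proof repairs exactly this point by adding the affine shift $+\lambda\rho_0$, i.e. $\Psi((u,v),\lambda)=\Phi(u,v)-\lambda\mathfrak F(u,v)+\lambda\rho_0$, with $\rho_0>0$ calibrated as follows: one introduces $f(\xi)=\sup\{\mathfrak F(u,v):\Phi(u,v)\leq\xi\}$ and shows, via Lemma \ref{crescitaF} and the embeddings of Lemma \ref{lem1}, that $f(\xi)/\xi\to 0$ as $\xi\to 0^{+}$; one builds the explicit cylindrically symmetric truncated test pair $(u_0,v_0)$ supported in the box $A_R\subset\Omega_0$ (using $(f_1)$ and $(h_K)$) with $\mathfrak F(u_0,v_0)>0$; then one picks $\xi_0$ and $\rho_0$ with $f(\xi_0)<\rho_0<\xi_0\,\mathfrak F(u_0,v_0)\bigl(\Phi(u_0,v_0)\bigr)^{-1}$. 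With this shift, $\sup_{\lambda\in[0,\infty)}\Psi((u,v),\lambda)=+\infty$ unless $\mathfrak F(u,v)\geq\rho_0$, so the right-hand side becomes $\inf\{\Phi(u,v):\mathfrak F(u,v)\geq\rho_0\}\geq\xi_0>0$, while a two-case analysis on the maximizing $\bar\lambda$ (either $\bar\lambda<\xi_0/\rho_0$, using $\Psi((0,0),\bar\lambda)=\bar\lambda\rho_0<\xi_0$, or $\bar\lambda\geq\xi_0/\rho_0$, using the test pair) shows the left-hand side is $<\xi_0$. Without this $\rho_0$-mechanism (or an equivalent one), Theorem \ref{teorricceri} simply cannot be invoked, so your proposal as written does not go through; the remaining steps (weak l.s.c. of $\Theta$, symmetric criticality, and the uniform norm bound, which the paper obtains by bounding $\bigcup_{\lambda\in[a,b]}\{\Psi(\cdot,\lambda)\leq\zeta\}$ for a compact $[a,b]\subset\Lambda$) are consistent with the paper.
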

\begin{proof}
We will show that the assumptions of Theorem \ref{teorricceri} are fulfilled by choosing $E=Y_T$ and $D=[0,\infty)$. Moreover, let us denote
$$
\Phi(u,v)=\frac{1}{2}\left(\|u\|^2+\|v\|^2\right),
$$
and
$$
\mathfrak F(u,v)=\int_{\Omega_\psi}K(q)F(u(q),v(q))d\mu(q),
$$
for every $(u,v)\in Y_T$, and define $\Psi:Y_T\times D\rightarrow \R$ as follows:
$$
\Psi((u,v),\lambda)=\Phi(u,v) - \lambda\mathfrak F(u,v) + \lambda\rho_0
$$
for every $(u,v)\in Y_T$, and $\lambda\in D$. Here $\rho_0$ is a positive and sufficiently small real parameter (see \eqref{condrho} below).

We observe that the function $\Psi((u,v),\cdot)$ is concave in $D$, for every $(u,v)\in Y_T$. Moreover, the functional
$\Psi(\cdot,\lambda)$ is continuous and sequential weak lower semicontinuity on $Y_T$, for every $\lambda\in D$.

Moreover, fixing $\lambda\in D$, on account of $(f_4)$, fixing $\mu_F,\nu_F\in [2,2^*]$, by H\"{o}lder's inequality and Lemma \ref{lem1}, one has
\begin{align*}
\Psi((u,v),\lambda) & \geq \Phi(u,v)-\lambda\left(\int_{\Omega_\psi} \left(\kappa_1K(q)|u(q)|^{p_F}+\kappa_2K(q)|v(q)|^{q_F} +\kappa_3K(q) \right) d\mu(q) \right)  \\
&\geq \Phi(u,v) -\lambda \left(\kappa_1\left\|K\right\|_{\mu_F/(\mu_F-p_F)}\left\| u\right\|_{\mu_F}^{p_F} + \kappa_2\left\|K\right\|_{\nu_F/(\nu_F-q_F)}\left\| v\right\|_{\nu_F}^{q_F} +\kappa_3\left\|K\right\|_1 \right) \\
& \geq\Phi(u,v) -c\lambda \left(\left\| u\right\|^{p_F} + \left\| v\right\|^{q_F} +1 \right)
\end{align*}
for some $c>0$. Thus
$$
\lim_{\|(u,v)\|\rightarrow \infty}\Psi((u,v),\lambda)=\infty,
$$
since $\max\{p_F,q_F\}<2$.

Hence $(\Psi_1)$ and $(\Psi_2)$ of Theorem \ref{teorricceri} are verified. Next, we deal with $(\Psi_3)$. First, let us consider the real function $f:(0,\infty)\to\R$ defined by
$$
f(\xi)=\sup_{(u,v)\in \Phi^{-1}((-\infty,\xi])}\mathfrak F(u,v),
$$
for every $\xi\in \R$.\par
By Lemma \ref{crescitaF} we have that for every $\varepsilon>0$ there exists $c_\varepsilon>0$ such that
\begin{equation*}
\max\{|\partial_1 F(\eta,\zeta)|, |\partial_2 F(\eta,\zeta)|\}\leq \varepsilon \left(|\eta|+ |\zeta|\right) + c_\varepsilon \left(|\eta|^{\alpha-1} + |\zeta|^{(\alpha-1)} \right),
\end{equation*}
and
\begin{equation}\label{stimamoduloF}
\begin{split}
|F(\eta,\zeta)|&\leq \varepsilon \left(|\eta|^{2} + 2|\eta||\zeta| + |\zeta|^2\right)\\
&\;\;\;  + c_\varepsilon \left(|\eta|^{\alpha} + |\zeta|^\alpha+|\zeta|^{(\alpha-1)}|\eta|  + |\zeta||\eta|^{(\alpha-1)} \right)
\end{split}
\end{equation}
for any $(\eta,\zeta)\in\R^2$.

Integrating \eqref{stimamoduloF} and using the Young inequality we easily get
\begin{align*}
\int_{\Omega_\psi} K(q)|F(u(q),v(q))| d\mu(q) & \leq
 2\varepsilon\|K\|_\infty\int_{\Omega_\psi} \left(|u(q)|^2 + |v(q)|^2 \right) d\mu(q)\\
& \;\;\; + 2c_\varepsilon\|K\|_\infty \int_{\Omega_\psi} \left(|u(q)|^\alpha +|v(q)|^\alpha \right) d\mu(q),
\end{align*}
for every $(u,v)\in Y_T$.

Furthermore, invoking the embeddings result stated in Lemma \ref{lem1}, since $\nu\in [0,C_V^{-1}n^2)$, we deduce that
\begin{align*}
\int_{\Omega_\psi} K(q)|F(u(q),v(q))| d\mu(q) &\leq \varepsilon c_2\|K\|_\infty \left(\left\|u \right\|^2+\left\|v \right\|^2 \right)\\
&\quad + c_\varepsilon c_\alpha^{\alpha}\|K\|_\infty\left(\left\|u \right\|^\alpha +\left\|v \right\|^\alpha \right).
\end{align*}
\indent Now, taking into account that the real function defined by $\xi\mapsto (a^\xi +b^\xi)^{1/\xi}$, $\xi>0$, $a,b\geq 0$, is nonincreasing, it follows that
$$
\left\| u\right\|^\alpha + \left\| v\right\|^\alpha \leq \left(\left\| u\right\|^2 + \left\| v\right\|^2 \right)^{\alpha/2}
$$
and therefore
\begin{align*}
\mathfrak F(u,v)
& \leq 4\varepsilon c_2\|K\|_\infty\left(\frac{\left\|u \right\|^2 +\left\|v \right\|^2}{2}\right) \\
&\;\;\; + 2^{\alpha/2+1}c_\varepsilon c_\alpha^{\alpha}  \|K\|_\infty\left(\frac{\left\|u \right\|^2 + \left\|v \right\|^2 }{2}\right)^{\alpha/2}.
\end{align*}

The above inequality yields
\begin{align}
f(\xi)&=\sup_{(u,v)\in \Phi^{-1}((-\infty,\xi])}\mathfrak F(u,v)\nonumber \\
& = \sup_{(u,v)\in \Phi^{-1}((-\infty,\xi])} \int_{\Omega_\psi}K(q)F(u(q),v(q))d\mu(q) \nonumber\\
& \leq \|K\|_\infty\sup_{(u,v)\in \Phi^{-1}((-\infty,\xi])} \left(4\varepsilon c_2\left(\frac{\left\|u \right\|^2 +\left\|v \right\|^2}{2}\right)+2^{\alpha/2+1}c_\varepsilon c_\alpha^{\alpha}  \left(\frac{\left\|u \right\|^2 + \left\|v \right\|^2 }{2}\right)^{\alpha/2}\right)\nonumber\\
&\leq \|K\|_\infty\left(4c_2\xi + 2^{\alpha/2+1}c_\varepsilon c_\alpha^{\alpha} \xi^{\alpha/2}\right)\nonumber
\end{align}

\noindent for every $\xi>0$.

Since the nonlinearity $f$ is nonnegative, it follows that
\begin{equation}\label{limitefa0}
\lim_{\xi\to 0^+}\frac{f(\xi)}{\xi}=0.
\end{equation}

\noindent Now, we claim that that there exists $(u_0,v_0)\in Y_T$ such that
 \begin{equation}\label{Fpos}
 \int_{\Omega_\psi}K(\sigma)F(u_0(q),v_0(q))d\mu(q)>0.
\end{equation}

Indeed, following Balogh and Krist\'{a}ly in \cite{bk}, we construct a special test function belonging to $HW^{1,2}_{0,T}(\Omega_\psi)$ that will be useful for our purposes. Let $$\widehat{\Omega}_0=\bigcup_{\widehat{\tau}\in \mathbb{U}(n)}\{\widehat{\tau}*\Omega_0\},$$
where $\Omega_0$ is the open set of $\mathbb H^n$ given in $(h_K)$. Since $K$ is cylindrically symmetric, one has
\begin{equation}\label{cpos}
\inf_{q\in \Omega_0} K(q)=\inf_{q\in \widehat{\Omega}_0} K(q)>0.
\end{equation}

Furthermore, we can find $(z_0,t_0)\in \Omega_\psi$ and
 \begin{equation}\label{Rcpos}
0<R<2|z_0|(\sqrt{2}-1),
\end{equation}
such that

\begin{equation}\label{A_R}
A_R=\{q\in \mathbb H^n: q=(z,t)\,\mbox{with}\, ||z|-|z_0||\leq R,\,|t-t_0|\leq R\}\subset \Omega_0.
\end{equation}

Of course, for every $\varrho\in (0,1]$, it follows that
$$
A_{\varrho R}\subseteq A_R\subset \Omega_0,
$$
and $\mu(A_{\varrho R})>0$.

Set $\varrho\in (0,1)$ and $c_0\in \R$. Let us consider the function $v_\varrho^{c_0}\in HW^{1,2}_{0, \rm cyl}(\Omega_\psi)\subseteq HW^{1,2}_{0,T}(\Omega_\psi)$ given by
\[
v_{\varrho}^{c_0}(q)= \frac{c_0}{1-\varrho}\left(1-\max\left(\frac{||z|-|z_0||}{R},\frac{||t|-|t_0||}{R},\varrho\right)\right)_+,\quad q=(z,t)\in \Omega_\psi
\]
where $\ell_+:=\max\{0,\ell\}$.
With the above notation, we have:
 \begin{itemize}
 \item[$i_1)$] $\textrm{supp}(v_\varrho^{c_0})=A_R$;
 \item[$i_2)$] $\|v_\varrho^{c_0}\|_\infty\leq |c_0|$;
 \item[$i_3)$] $v_\varrho^{c_0}(q)=c_0$ for every $q\in A_{\varrho R}$.
 \end{itemize}

 \indent By $(f_1)$, there exists $(\eta_0,\zeta_0)\in \R^2\setminus\{(0,0)\}$ such that $F(\eta_0,\zeta_0)>0$. Moreover,

\begin{align*}
\int_{\Omega_\psi}
K(q) F(v_\varrho^{\eta_0}(q), v_\varrho^{\zeta_0}(q))\,d\mu(q) & = \int_{A_{\varrho R}}
K(q)F(v_\varrho^{\eta_0}(q), v_\varrho^{\zeta_0}(q))\,d\mu(q)\\
& \quad+\int_{A_{R}\setminus A_{\varrho R}}
K(q)F(v_\varrho^{\eta_0}(q), v_\varrho^{\zeta_0}(q))\,d\mu(q).
\end{align*}

It then follows that
\begin{align}\label{denomi}
\int_{\Omega_\psi}
K(q) F(v_\varrho^{\eta_0}(q), v_\varrho^{\zeta_0}(q))\,d\mu(q) & \geq \inf_{q\in A_{R}}K(q)F(\eta_0,\zeta_0)\mu(A_{\varrho R})\\
& \quad -\|K\|_{\infty}\max_{(|\eta|,|\zeta|)\in[0,|\eta_0|]\times [0,|\zeta_0|]}
 |F(\eta,\zeta)|\mu(A_R\setminus A_{\varrho R}). \nonumber
\end{align}
Since $\mu(A_R\setminus A_{\varrho R})\rightarrow 0$, as $\varrho\to 1^-$, we of course, get
\[
\|K\|_{\infty}\max_{(|\eta|,|\zeta|)\in[0,|\eta_0|]\times [0,|\zeta_0|]}
 |F(\eta,\zeta)|\mu(A_R\setminus A_{\varrho R})\rightarrow 0,
\]
as $\varrho\to 1^-$. Moreover,
\begin{equation*}
\mu(A_{\varrho R})\rightarrow \mu(A_R)
\end{equation*}
as $\varrho\to 1^-$. Thus there exists $\varrho_0>0$ such that
\[
\inf_{q\in A_{R}}K(q)F(\eta_0,\zeta_0)\mu(A_{\varrho_0 R})>\|K\|_{\infty}\max_{(|\eta|,|\zeta|)\in[0,|\eta_0|]\times [0,|\zeta_0|]}
 |F(\eta,\zeta)|\mu(A_R\setminus A_{\varrho_0 R}).
\]

Hence \eqref{Fpos} can be proved by choosing
$$
u_0(q)=v_{\varrho_0}^{\eta_0}(q)= \frac{\eta_0}{1-\varrho_0}\left(1-\max\left(\frac{||z|-|z_0||}{R},\frac{||t|-|t_0||}{R},\varrho_0\right)\right)_+,
$$
and
$$
v_0(q)=v_{\varrho_0}^{\zeta_0}(q)= \frac{\zeta_0}{1-\varrho_0}\left(1-\max\left(\frac{||z|-|z_0||}{R},\frac{||t|-|t_0||}{R},\varrho_0\right)\right)_+,
$$
for every $q=(z,t)\in \Omega_\psi$.

Now, fix $\eta\in\R$ such that
$$
0<\eta < \mathfrak F(u_0,v_0)\left(\frac{\left\|u_0 \right\|^2 + \left\|v_0 \right\|^2}{2}\right)^{-1}.
$$
By \eqref{limitefa0}, there exists $\displaystyle\xi_0\in\left(0, \frac{\left\|u_0 \right\|^2 + \left\|v_0 \right\|^2}{2} \right)$ such that $f(\xi_0)<\eta\xi_0$.

Let $\rho_0>0$ such that
\begin{equation}\label{condrho}
f(\xi_0) < \rho_0 < \xi_0 \mathfrak F(u_0,v_0)\left(\frac{\left\|u_0 \right\|^2 + \left\|v_0 \right\|^2}{2}\right)^{-1}.
\end{equation}
Due to the choice of $\xi_0$, one has $\rho_0 < \mathfrak F(u_0,v_0)$.

 We are  now in  position to prove that the following strict inequality holds
$$\displaystyle\sup_{\lambda\in D}\inf_{(u,v)\in Y_T}\Psi((u,v),\lambda)<\displaystyle\inf_{(u,v)\in Y_T}\sup_{\lambda\in D}\Psi((u,v),\lambda),$$
\noindent i.e. that condition $(\Psi_3)$ of Theorem \ref{teorricceri} is satisfied.

Indeed, the real function $$\lambda\mapsto \inf_{(u,v)\in Y_T}\Psi((u,v),\lambda)$$ is upper semicontinuous on $D$ and
$$
\lim_{\lambda \to \infty} \inf_{(u,v)\in Y_T}\Psi ((u,v),\lambda) \leq \lim_{\lambda \to \infty} \Psi((u_0, v_0), \lambda) = -\infty.
$$

Consequently (see \cite[Chapter I]{MaWi}), there exists $\bar\lambda \in D$ such that
\begin{equation}\label{relazionelambdabar}
\sup_{\lambda\in D}\inf_{(u,v)\in Y_T}\Psi((u,v),\lambda) = \inf_{(u,v)\in Y_T}\Psi((u,v),\bar\lambda).
\end{equation}
For each $(u,v)\in \Phi^{-1}((-\infty,\xi_0])$ we have
$$
\mathfrak F(u,v) \leq f(\xi_0) <\rho_0
$$
and hence
\begin{equation}\label{stimaxi0}
\xi_0 \leq \inf\left\lbrace \Phi(u,v): \mathfrak F(u,v)\geq \rho_0 \right\rbrace.
\end{equation}

On the other hand, we also have
\begin{align*}
\inf_{(u,v)\in Y_T}\sup_{\lambda\in D} \Psi((u,v),\lambda) & = \inf_{(u,v)\in Y_T}\left( \Phi(u,v) + \sup_{\lambda\in D} \left(  \lambda\left( \rho_0 - \mathfrak F(u,v)\right) \right) \right) \\
& = \inf_{(u,v)\in Y_T} \left\lbrace \Phi(u,v): \mathfrak F(u,v)\geq \rho_0\right\rbrace,
\end{align*}
and therefore
\begin{equation}\label{altrastimaxi0}
\xi_0 \leq \inf_{(u,v)\in Y_T}\sup_{\lambda\in D} \Psi((u,v),\lambda).
\end{equation}

\noindent There are two distinct cases.\par

If $0\leq \bar\lambda < \xi_0/\rho_0$, it follows that
$$
\inf_{(u,v)\in Y_T} \Psi((u,v),\bar\lambda) \leq \Phi(0,0) - \bar\lambda \mathfrak F(0,0) +\bar\lambda\rho_0= \bar\lambda\rho_0 < \xi_0,
$$
and inequality $(\Psi_3)$ is verified.

If $\bar\lambda \in (\xi_0/\rho_0,\infty)$ it is easy to note that
$$
\inf_{(u,v)\in Y_T} \Psi((u,v),\bar\lambda)  \leq \Psi((u_0,v_0),\bar\lambda) \leq \Psi((u_0,v_0),\xi_0/\rho_0) < \xi_0.
$$
Hence, also in this case inequality $(\Psi_3)$ is satisfied.

Therefore, fixing $\zeta >\displaystyle\sup_{\lambda \in D}\inf_{(u,v)\in Y_T}\Psi((u,v),\lambda)$, Theorem \ref{teorricceri} assures the existence of a nonempty open set $\Lambda \subseteq D$ with the following property:\par
\smallskip
{\it If $\lambda \in \Lambda$ and $G: \Omega_\psi\times \R^2 \to \R$ is continuous and satisfies $(g_1)$--$(g_3)$, then there exists $\delta >0$ such that, for each $\mu \in (0,\delta)$, the functional
$$
\mathcal E_{\lambda,\mu}(u,v)=\Psi((u,v),\lambda)+\mu \Theta(u,v),\quad \mbox{for every}\,\,(u,v)\in Y_T
$$
has at least two local minima in $$\{(u,v)\in Y_T: \Psi((u,v),\lambda) <\zeta\},$$ say $(u^{(j)}_{\lambda,\mu},v^{(j)}_{\lambda,\mu})$, with $j \in \{1,2\}$.}\par
\smallskip
  Here $\Theta:Y_T\to\R$ is the functional defined by
$$
\Theta(u,v)=-\int_{\Omega_\psi}G(q,u(q),v(q)) d\mu(q).
$$
\indent Notice that, similarly to $\mathfrak F$, the functional $\Theta$ is sequentially weakly continuous on $Y_T$ thanks to assumptions $(g_1)$--$(g_3)$.
 Now $K(\cdot)$ and $G(\cdot, \eta,\zeta)$ are symmetric functions (respectively by $(h_K)$ and $(g_4)$), and the action $\pi_{\sharp}:T\times X\rightarrow X$ given by
$$
\pi_{\sharp}(\widehat{\tau}, (u,v))=(\widehat{\tau}\sharp u, \widehat{\tau}\sharp v),
$$
for every $\widehat{\tau}\in T$ and $(u,v)\in X$, is isometric. Thus, the functional $I_{\lambda,\mu}:X\rightarrow \R$
$$
I_{\lambda,\mu}(u,v)=\frac{1}{2}\left(\|u\|^2+\|v\|^2\right)-\lambda\int_{\Omega_\psi}\!\!K(q)F(u(q),v(q))d\mu(q)
$$
$$
\qquad-\mu\int_{\Omega_\psi} G(q,u(q),v(q)) d\mu(q),
$$
for every $(u,v)\in X$, is $T$--invariant, i.e.
$$
I_{\lambda,\mu}(\pi_{\sharp}(\widehat{\tau}, (u,v)))=I_{\lambda,\mu}(u,v),
$$
for every $(u,v)\in X$,
see Section \ref{sottosezione} for details.

Moreover, $$I_{\lambda,\mu}|_{Y_T}(u,v)=\mathcal E_{\lambda,\mu}(u,v)-\lambda\rho_0=\Psi((u,v),\lambda)+\mu \Theta(u,v)-\lambda\rho_0,$$
for every $(u,v)\in Y_T.$

 By Theorem \ref{lem3},
 $(u^{(j)}_{\lambda,\mu},v^{(j)}_{\lambda,\mu})\in Y_T$, with $j \in \{1,2\}$, turn out
also to be critical points of $I_{\lambda,\mu}$ and hence weak solutions to \eqref{p2}.

Finally, to estimate the norm of $(u^{(j)}_{\lambda,\mu},v^{(j)}_{\lambda,\mu})\in Y_T$, with $j \in \{1,2\}$, we take a nondegenerate compact interval $[a,b] \subset \Lambda$. Notice that one has
\begin{align*}
&\bigcup_{\lambda \in [a,b]}\{(u,v) \in Y_T: \Psi((u,v), \lambda) \leq \zeta \}\\
&\subseteq \{(u,v) \in Y_T: \Psi((u,v), a) \leq \zeta \} \cup \{(u,v) \in Y_T: \Psi((u,v),b) \leq \zeta \}
\end{align*}
and hence the set
$$
S:=\bigcup_{\lambda \in [a,b]}\{(u,v) \in Y_T: \Psi((u,v), \lambda) \leq \zeta \}
$$
is bounded. In conclusion, the local minima of the energy functional $\mathcal E_{\lambda, \mu}$ (defined on $Y_T$) have norm at most equal to $\sigma=\displaystyle\sup_{(u,v) \in S}\|(u,v)\|$. This concludes the proof.
\end{proof}

A direct application of Theorem \ref{th1} is given below.

\begin{ex}\label{esempio}\rm{
Let $\Omega_\psi\subset \mathbb H^n$ be as in \eqref{omega}, with $O=(0,0)\in \Omega_\psi$ and let $V,K:\Omega_\psi\rightarrow \R$ be potentials satisfying respectively $(h_V)$ and $(h_K)$.
Furthermore, let us fix $\alpha\in (2,2^*)$ and let $F:\R^2\to\R$ be a $C^1$--function defined by
	\begin{align*}
	F(\eta,\zeta)= \sin\left(|\eta|^{\alpha} + |\zeta|^{\alpha}\right),
	\end{align*}
	for every $(\eta,\zeta)\in\R^2$.
Then, if $\nu\in [0,C_V^{-1}n^2)$, there exist by Theorem \ref{th1} a number $\sigma>0$ and a nonempty open set $\Lambda\subset (0,\infty)$ such that, for every $\lambda\in \Lambda$, the following singular subelliptic system
\begin{equation*}\left\{\begin{aligned}
&-\Delta_{\mathbb H^n}u-\nu V(q)u+u= \alpha\lambda K(q)|u|^{\alpha-2}u\cos(|u|^{\alpha}+|v|^{\alpha})\quad{\rm in}\,\,\Omega_\psi\\
&-\Delta_{\mathbb H^n}v-\nu V(q)v+v= \alpha\lambda K(q)|v|^{\alpha-2}v\cos(|u|^{\alpha}+|v|^{\alpha})\, \quad{\rm in}\,\,\Omega_\psi\\
& \,\,\,u=v=0 \,\,{\rm on }\,\, \partial\Omega_\psi,
\end{aligned}\right.\end{equation*}
\noindent has at least two weak solutions $(u_{\lambda,\mu}^{(j)}, v_{\lambda,\mu}^{(j)})\in HW^{1,2}_{0,{\rm cyl}}(\Omega_\psi)\times HW^{1,2}_{0,{\rm cyl}}(\Omega_\psi)$, with $j\in \{1,2\}$, lying in the ball
$$
\{(u,v)\in HW^{1,2}_{0,{\rm cyl}
}(\Omega_\psi)\times HW^{1,2}_{0,{\rm cyl}
}(\Omega_\psi):\|(u,v)\|\leq \sigma\}.
$$
In other words,
$$
\left(\|u_{\lambda,\mu}^{(j)}\|_{HW^{1,2}_0(\Omega_\psi)}^2-\nu\int_{\Omega_\psi}V(q)|u_{\lambda,\mu}^{(j)}(q)|^2d\mu(q)\right)^{1/2}\leq \sigma,
$$
and
$$
\left(\|v_{\lambda,\mu}^{(j)}\|_{HW^{1,2}_0(\Omega_\psi)}^2-\nu\int_{\Omega_\psi}V(q)|v_{\lambda,\mu}^{(j)}(q)|^2d\mu(q)\right)^{1/2}\leq \sigma,
$$
for $j\in \{1,2\}$.
}
\end{ex}
\begin{remark}\rm{For the sake of completeness we point out that the results presented in this paper could be also investigated
for a larger class of elliptic equations where the leading term is governed by
some differential operators such as the ones considered in \cite{Mingio1,Mingio3,Mingio2}. However, in these cases some different technical approaches need to be adopted in order to get analogous existence results for this wider class of energies. We will consider these interesting cases in our future investigations.}
\end{remark}
\textbf{Acknowledgments.}
The authors were partially supported
by the Italian MIUR project
{\em Variational methods, with applications to problems in mathematical
physics and geometry} (2015KB9WPT\_009) and the Slovenian Research Agency grants P1-0292 and J1-8131.

\bigskip


\begin{thebibliography}{00}

\bibitem{bk} Z.M. Balogh, A. Krist\'aly, {\it Lions--type compactness and Rubik actions on the Heisenberg group},
Calc. Var. Partial Differential Equations {\bf 48} (1995), 89--109.

\bibitem{BBE} O.E. Barndorff--Nielsen, P. Bl{\ae}sild, P.S. Eriksen, {\it Decomposition and invariance of measures, and statistical transformation models}, Lecture Notes in Statistics {\bf58}, Springer--Verlag, New York, vi+147 pp. 1989.

\bibitem{Mingio1} {P. Baroni, M. Colombo, G. Mingione}, {\it Non--autonomous Functionals, Borderline Cases and Related
Function Classes}, St. Petersburg Math. J. \textbf{27} (2016), 347--379.

\bibitem{Mingio3} {P. Baroni, M. Colombo, G. Mingione,} {\it Harnack inequalities for double phase functionals}, Nonlinear
Anal. Theory Methods Appl. \textbf{121} (2015), 206--222.

\bibitem{Mingio2} {P, Baroni, T. Kuusi, G. Mingione}, {\it Borderline gradient continuity of minima}, J. Fixed Point Theory
Appl. \textbf{15} (2014), 537--575.

\bibitem{BLU} A. Bonfiglioli, E. Lanconelli, F. Uguzzoni, {\it Stratified Lie groups and potential theory for their sub--Laplacians}, Springer Monographs in Mathematics, Springer, Berlin, xxvi+800 pp. 2007.

\bibitem{BFP} S. Bordoni, R. Filippucci, P. Pucci, \textit{Existence of solutions in problems on Heisenberg groups involving Hardy and critical terms}, submitted for publication, pages 28.

\bibitem{BP} S. Bordoni, P. Pucci,  {\it Schr\"odinger--Hardy systems involving two Laplacian operators in the Heisenberg group}, to appear in Bull. Sci. Math., pages 30.

\bibitem{Btop} N. Bourbaki, {\it General topology -- Chapters 1--4}, Translated from the French, Reprint of the 1989 English translation, {\it Elements of Mathematics}, Springer--Verlag, Berlin,  vii+437 pp. 1998.

\bibitem{BLie} N. Bourbaki, {\it Integration, II -- Chapters 7--9}, Translated from the 1963 and 1969 French originals by Sterling K. Berberian, {\it Elements of Mathematics}, Springer--Verlag, Berlin,  viii+326 pp. 2004.

\bibitem{CaVi} F. Cammaroto, L. Vilasi, \textit{Axially symmetric solutions to a fractional system on strip--like domains}, Minimax Theory and its Applications, Vol. 3 (2018), 23--34.

\bibitem{cp} M. Caponi, P. Pucci, {\it Existence theorems for entire solutions of stationary Kirchhoff fractional $p$--Laplacian equations},
Ann. Mat. Pura Appl. {\bf 195} (2016), 2099--2129.

\bibitem{C} W.--L. Chow, {\it \"Uber Systeme von linearen partiellen Differentialgleichungen erster Ordnung},
Math. Ann. {\bf117} (1939),  98--105.

\bibitem{dAM} L. D'Ambrosio, E. Mitidieri, {\it Quasilinear elliptic equations with critical potentials}, Adv. Nonlinear Anal. {\bf6} (2017),  147--164.

\bibitem{demo} D.C. de Morais Filho, M.A.S. Souto, J.M. do \'O, {\it A compactness embedding lemma, a principle of
symmetric criticality and applications to elliptic problems}, Proyecciones \textbf{19} (2000), 1--17.

\bibitem{dPF} M.A. del Pino, P.L. Felmer, {\it Least energy solutions for elliptic equations in unbounded domains}, Proc. Roy. Soc. Edinburgh Sect. A {\bf126} (1996),  195--208.

\bibitem{EL} M. Esteban, P.L. Lions, \textit{A compactness lemma}, Nonlinear Anal. \textbf{7} (4) (1983), 381--385.

\bibitem{fo} A. F\"{o}glein, \textit{Partial regularity results for sub--elliptic systems in the Heisenberg group}, Calc. Var. Partial Differential Equations \textbf{32} (2008), 25--51.

\bibitem{fol}  G.B.~Folland, {\it Subelliptic estimates and function spaces on nilpotent Lie groups}, Ark. Math. {\bf 13} (1975), 161--207.

\bibitem{folStein}  G.B.~Folland, E.M.~Stein, {\it Estimates for the $\bar \partial_b$ complex and analysis on the Heisenberg group},  Comm. Pure Appl. Math. {\bf 27} (1974), 429--522.

\bibitem{Gao} D. Gao, P. Niu, J. Wang, {\it Partial regularity for degenerate subelliptic systems associated with H\"{o}rmander's vector fields}, Nonlinear Anal. \textbf{73} (2010), 3209--3223.

\bibitem{GL}  N.~Garofalo, E.~Lanconelli, {\it Frequency functions on the Heisenberg group, the uncertainty principle and unique continuation},  Ann. Inst. Fourier {\bf 40} (1990), 313--356.

\bibitem{GarN}  N.~Garofalo, D.--M.~Nhieu, {\it Isoperimetric and Sobolev inequalities for Carnot--Carath\'eodory spaces and the existence of minimal surfaces}, Comm. Pure Appl. Math. {\bf 49} (1996), 1081--1144.

\bibitem{GV} N.~Garofalo, D. Vassilev, {\it Regularity near the characteristic set in the non-linear Dirichlet problem and conformal geometry of sub-Laplacians on Carnot groups}, Math. Ann. {\bf318} (2000),  453--516.

\bibitem{hor} L.~H\"ormander, {\it Hypoelliptic second order differential equations},  Acta Math.  {\bf 119} (1967), 147--171.

\bibitem{IV} S.P. Ivanov, D.N. Vassilev, {\it Extremals for the Sobolev inequality and the quaternionic contact Yamabe problem}, World Scientific Publishing Co. Pte. Ltd., Hackensack, NJ, xviii+219 pp., 2011.

\bibitem{KO} J. Kobayashi, M. Otani, \textit{The principle of symmetric criticality for non--differentiable mappings}, J. Funct.
Anal. \textbf{214} (2) (2004), 428--449.

\bibitem{K} A. Krist\'aly,
{\it Existence of two non--trivial solutions for a class of quasilinear elliptic problems in strip--like domains}, Proc. Edimb. Math. Soc. \textbf{48} (2005), 465--477.

\bibitem{KRV} A. Krist\'aly, V.D. R\u adulescu, C.G. Varga,
{\it Variational principles in mathematical physics, geometry, and economics.
Qualitative analysis of nonlinear equations and unilateral problems}, with a foreword by Jean Mawhin, Encyclopedia of Mathematics and its Applications {\bf 136}, Cambridge University Press, Cambridge, 2010, xvi+368 pp.

\bibitem{Lions} P.L. Lions, \textit{Sym\'{e}trie et compacit\'{e} dans les espaces Sobolev}, J. Funct. Analysis \textbf{49}
(1982), 315--334.

\bibitem{M1} S. Maad, {\it Infinitely many solutions of a symmetric semilinear elliptic equation on an unbounded domain}, Ark. Mat. {\bf41} (2003), 105--114.

\bibitem{M2} S. Maad, {\em A semilinear problem for the Heisenberg Laplacian on unbounded domains},
 Canad. J. Math. {\bf57} (2005), 1279--1290.

\bibitem{rigJDE} M.~Magliaro, L.~Mari, P.~Mastrolia, M.~Rigoli,
  {\it Keller--Osserman type conditions for differential inequalities with gradient terms on the Heisenberg
  group}, J. Diff. Equations
 {\bf250} (2011), 2643--2670.

 \bibitem{MaMi} J. Manfredi, G. Mingione, \textit{Regularity results for quasilinear elliptic equations in the Heisenberg group}, Math. Ann. \textbf{339} (2007), 485--544.

\bibitem{MaWi} {J. Mawhin, M. Willem}, Critical point theory and Hamiltonian systems, Applied Mathematical Sciences, \textbf{74}. Springer-Verlag, New York, 1989. xiv+277 pp.

\bibitem{MZZ} G.~Mingione, A.~Zatorska--Goldstein, X.~ Zhong,
  {\it Gradient regularity for elliptic equations in the Heisenberg group}, Adv. Math.
 {\bf 222} (2009), 62--129.

\bibitem{MBF} G. Molica Bisci, M. Ferrara, {\it Subelliptic and parametric equations on Carnot groups}, Proc. Amer. Math. Soc. {\bf144} (2016),  3035--3045.

\bibitem{MolicaPucci} G. Molica Bisci, P. Pucci, {\it Critical Dirichlet problems on $\mathcal{H}$ domains of Carnot groups}, to appear in Electronic Journal of Differential Equations.

\bibitem{MBP} G. Molica Bisci, P. Pucci, {\it Hypoelliptic critical Dirichlet problems
in unbounded domains of the Heisenberg group}, in preparation.

\bibitem{MBR} G. Molica Bisci, D. Repov\v s, {\it Yamabe--type equations on Carnot groups}, Potential Anal. {\bf46} (2017),  369--383.

\bibitem{pala} {R.S. Palais}, {\it The principle of symmetric criticality}, Commun. Math. Phys. \textbf{69} (1979), 19--30.

\bibitem{Pucci} P. Pucci, {\it Critical Schr\"{o}dinger--Hardy systems in the Heisenberg group}, to appear in Discrete Contin. Dyn. Syst. Ser. S, Special Issue on the occasion of the 60th birthday of V. R\u{a}dulescu, to appear.

\bibitem{R} P.K. Rashevski\u\i\ $\!\!,$ {\it About connecting two points of complete nonholonomic space by admissible curve}, Uch. Zapiski Ped. Inst. K. Liebknecht {\bf2} (1938), 83--94.

\bibitem{Ricceri} B. Ricceri, \textit{Minimax theorems for limits of parametrized functions having at most one
local minimum lying in a certain set}, Topology and its Applications \textbf{153}  (2006), 3308--3312.

\bibitem{ST} I. Schindler, K. Tintarev, {\it  Semilinear subelliptic problems without compactness on Lie groups}, NoDEA Nonlinear Differential Equations Appl. {\bf 11} (2004),  299--309.

\bibitem{T} K. Tintarev, {\it Semilinear elliptic problems on unbounded subsets of the Heisenberg group}, Electron. J. Differential Equations 2001, No. 18, 8 pp.

\bibitem{TF} K. Tintarev, K.-H. Fieseler, {\it
Concentration compactness.
Functional--analytic grounds and applications}, Imperial College Press, London, 2007,
 xii+264 pp.

 \bibitem{Vas} D.~Vassilev,
  {\it Existence of solutions and regularity near the characteristic boundary for sub--Laplacian equations on Carnot groups}, Pacific J. Math.
 {\bf 227} (2006), 361--397.

\bibitem{V}  D. Vittone, {\it Submanifolds in Carnot groups}, Thesis, Scuola Normale Superiore, Pisa, 2008, (New Series) 7, Edizioni della Normale,  xx+180 pp.

  \bibitem{ANONA1}  J. Wang, J. Manfredi, \textit{Partial H\"{o}lder continuity for nonlinear sub--elliptic systems with VMO--coefficients in the Heisenberg group}, Adv. Nonlinear Anal. \textbf{7} (2018), no. 1, 97--116.

     \bibitem{a} J. Wang and P. Niu, \textit{Optimal partial regularity for weak solutions of nonlinear sub--elliptic systems in Carnot groups}, Nonlinear Anal. \textbf{72} (2010), 4162--4187.

\bibitem{b} C. Xu, C. Zuily, \textit{Higher interior regularity for quasilinear subelliptic systems}, Calc. Var. Partial Differential Equations \textbf{5} (1997), 323--343.
\end{thebibliography}
\end{document}